\numberwithin{equation}{section}
\newtheorem{lemma}[equation]{Lemma}
\newtheorem{theorem}[equation]{Theorem}
\newtheorem{proposition}[equation]{Proposition}
\theoremstyle{definition}
\newtheorem{definition}[equation]{Definition}
\newtheorem{eg}[equation]{Example}
\newcommand{\one}{\mathds{1}}
\newcommand{\Ver}{\mathsf{Ver}}
\renewcommand{\ge}{\geqslant}
\renewcommand{\le}{\leqslant}
\newcommand{\mfrak}{\mathfrak}
\newcommand{\mbb}{\mathbb}
\renewcommand{\one}{\mathds{1}}
\newcommand{\on}{\operatorname}
\newcommand{\wt}{\widetilde}
\newcommand{\ol}{\overline}
\DeclarePairedDelimiter\ang{\langle}{\rangle}
\DeclareMathOperator{\End}{End}
\DeclareMathOperator{\Hom}{Hom}
\DeclareMathOperator{\id}{id}
\DeclareMathOperator{\gr}{gr}
\DeclareMathOperator{\coker}{coker}
\author{Pavel Etingof and Serina Hu}
\address{Department of Mathematics, Massachusetts Institute of
  Technology, Cambridge, MA 02139, USA}
\email{etingof@math.mit.edu}
\address{Department of Mathematics, Massachusetts Institute of
  Technology, Cambridge, MA 02139, USA}
\email{serinahu@mit.edu}
\title{Lie superalgebras in characteristic 2 and mixed characteristic}
\begin{document}

\begin{abstract} We define the notion of a Lie superalgebra over a field $k$ of characteristic $2$ which unifies the two pre-existing ones -- $\Bbb Z/2$-graded Lie algebras with a squaring map and Lie algebras in the Verlinde category ${\rm Ver}_4^+(k)$, and prove the PBW theorem for this notion. We also do the same for the restricted version. Finally, discuss mixed characteristic deformation theory of such Lie superalgebras (for perfect $k$), introducing and studying a natural lift of our notion of Lie superalgebra to characteristic zero -- the notion of a mixed Lie superalgebra over a ramified quadratic extension $R$ of the ring of Witt vectors $W(k)$. 
\end{abstract}

\maketitle

\tableofcontents

\section{Introduction} 

Over a field $k$ of characteristic $\ne 2$, a Lie superalgebra 
is just a Lie algebra $L=L_0\oplus L_1$ in the symmetric tensor category of supervector spaces 
${\rm sVect}_k$, which is the category of $\Bbb Z/2$ graded vector spaces\footnote{In this paper tensor categories, such as ${\rm Vect}_k,{\rm sVect}_k$, ${\rm Ver}_4^+(k)$, etc. are not restricted to finite dimensional objects; rather, they are ind-completions of the rigid subcategories of compact (dualizable) objects.} 
with braiding $v\otimes w\mapsto (-1)^{\deg(v)\deg(w)}w\otimes v$
 (more precisely, in characteristic $3$ one needs an additional condition $[[x,x],x]=0$ 
for $x\in L_1$). Moreover, one defines the enveloping algebra $U(L)$ equipped with a natural increasing filtration, and there is a natural surjective algebra homomorphism $SL=SL_0\otimes \wedge L_1\to {\rm gr}U(L)$, which by the PBW theorem is an isomorphism. Here the exterior algebra of $L_1$ appears because of the relation 
\begin{equation}\label{eq1}
2x^2=[x,x],\ x\in L_1
\end{equation} 
in $U(L)$: in the associated graded algebra this relation becomes 
$2x^2=0$, which implies that $x^2=0$. 

This picture does not naively extend to characteristic $2$, however. 
Indeed, in this case $-1=1$, so the most obvious version of the 
category ${\rm sVect}_k$ is the category of $\Bbb Z/2$-graded vector spaces
with the trivial braiding $v\otimes w\mapsto w\otimes v$. Thus relation \eqref{eq1}
becomes just $[x,x]=0$, $x\in L_1$, so e.g. if $L=L_1$, it is vacuous and we get $U(L)=SL\ne\wedge L$ (as $\wedge L$ is the quotient of $SL$ by the relations $x^2=0$, $x\in L$, so it is smaller than $SL$ if $L\ne 0$). In general, it is reasonable to impose an additional relation $[x,x]=0$ also for $x\in L_0$ (as one does for usual Lie algebras in characteristic $2$), 
which leads to $L$ being just a usual $\Bbb Z/2$-graded Lie algebra, so 
${\rm gr}U(L)=SL=SL_0\otimes SL_1$, again different from $SL_0\otimes \wedge L_1$. 

In the literature, two approaches have been used to address this issue. The more classical approach (see \cite{bouarroudj_vectorial_2020}, \cite{bouarroudj_classification_2023}) 
is to divide relation \eqref{eq1} by $2$. More precisely, in this approach a Lie superalgebra 
over $k$ is defined as a $\Bbb Z/2$-graded Lie algebra $L=L_0\oplus L_1$ 
over $k$ endowed with a quadratic map $Q: L_1\to L_0$ called the {\it squaring map} (morally, $Q(y)=\frac{1}{2}[y,y]$, although this does not make literal sense as both $[y,y]$ and $2$ equal $0$) such that 
$$
Q(y_1+y_2)-Q(y_1)-Q(y_2)=[y_1,y_2]
$$
and 
$$
[Q(y),x]=[y,[y,x]]
$$
for all $x\in L$. Then one defines the super enveloping algebra 
$$
U_{\rm super}(L):=U(L)/(y^2-Q(y),\ y\in L_1)
$$ 
equipped with a natural surjective homomorphism $SL_0\otimes \wedge L_1\to {\rm gr}U_{\rm super}(L)$. The PBW theorem then says that this homomorphism is an isomorphism. 

The second, more ``exotic" approach, which originated in \cite{venkatesh_hilbert_2016} and has been further developed, e.g., in \cite{kaufer_superalgebra_2018}, \cite{hu_supergroups_2024}, \cite{hu_lialg_2025}, is to use a less conventional analog of 
the category ${\rm sVect}_k$ in characteristic $2$ -- the 
symmetric tensor category ${\rm Ver}_4^+(k)$ introduced in \cite{venkatesh_hilbert_2016} (see also \cite{benson_new_2021}). The category ${\rm Ver}_4^+(k)$ is the category of modules 
over the Hopf algebra $H:=k[D]/(D^2)$ with $\Delta(D)=D\otimes 1+1\otimes D$
and braiding defined by the triangular R-matrix $1\otimes 1+D\otimes D$, and, 
as explained in \cite{venkatesh_hilbert_2016}, it is obtained (for perfect $k$) by reduction 
to $k$ of a form of the category of supervector spaces over a ramified 
quadratic extension $R$ of the ring of Witt vectors $W(k)$. Thus objects of ${\rm Ver}_4^+(k)$ do not carry a $\Bbb Z/2$-grading, but the finite dimensional ones are of the form $L=m\one+nP$, where $\one=k$ and $P=H$, which is a reduction to $k$ of a superspace of superdimension $(m+n|n)$. In the second approach, a Lie superalgebra over $k$ is defined as a Lie algebra $L$ in ${\rm Ver}_4^+(k)$ satisfying the PBW condition $[x,x]=0$ if $Dx=0$. Then 
one has the PBW isomorphism $SL\cong {\rm gr}U(L)$ (\cite{kaufer_superalgebra_2018}, \cite{hu_lialg_2025}), and 
if $\lbrace z_1,...,z_m,x_1,...,x_n,y_1,...,y_n\rbrace$ is a basis of $L$ 
such that $Dz_j=0,Dx_i=y_i$ then $SL=k[z_1,...,z_m,x_1,...,x_n]\otimes \wedge (y_1,...,y_n)$, in good agreement with the theory of Lie superalgebras in characteristic $\ne 2$. 

In this paper we show that these two definitions of a Lie superalgebra 
in characteristic $2$ are special cases of a common generalization - the notion of a {\it Lie superalgebra in ${\rm Ver}_4^+(k)$}. In this definition, 
a Lie superalgebra structure is defined on what we call a {\it super-object} of ${\rm Ver}_4^+(k)$, which is an object $L\in {\rm Ver}_4^+(k)$ equipped with a $\Bbb Z/2$-grading 
on the cohomology $\mathcal H(L)$ of $D$ on $L$: $\mathcal H(L)=\mathcal H_0(L)\oplus \mathcal H_1(L)$. Thus finite dimensional super-objects $L=m\one\oplus nP$ are characterized by a triple of integer parameters, $(m_0,m_1,m_2)$, which we call the {\it superdimension of $L$}: $m_j=\dim \mathcal H_j(L)$ (so $m_0+m_1=m$) and $m_2=n$. The first approach is then the special case $m_2=0$ which we call {\it classical} (as then $L\in {\rm Vect}_k$), while the second approach is the special case $m_1=0$, which we call {\it pure}. 
We give examples of Lie superalgebras in ${\rm Ver}_4^+(k)$ and prove a PBW theorem for them. 
We also define the notion of a restricted Lie superalgebra in ${\rm Ver}_4^+(k)$ generalizing the notions of a restricted Lie superalgebra over $k$ from \cite{bouarroudj_classification_2023} and a restricted Lie algebra in 
${\rm Ver}_4^+(k)$ defined in \cite{benson_group_2025} and prove the PBW theorem for these algebras. 

Also, motivated by the fact that ${\rm Ver}_4^+(k)$ is a reduction to $k$ 
of the category of supervector spaces over a ramified $2$-adic ring $R$, we 
consider mixed characteristic deformation theory of Lie superalgebras 
in ${\rm Ver}_4^+(k)$ over $R$. Namely, we define a natural notion of a {\it mixed Lie superalgebra} over $R$, and show that the reduction to $k$ of a mixed Lie superalgebra
is a Lie superalgebra in ${\rm Ver}_4^+(k)$. We then discuss the lifting problem 
for Lie superalgebras in ${\rm Ver}_4^+(k)$, and give a number of examples. 

The organization of the paper is as follows. In Section 2 we discuss the basics of superalgebra
in characteristic $2$ and mixed characteristic, and in particular introduce the categories of mixed superspaces and of super-objects in ${\rm Ver}_4^+(k)$. In Section 3 we define and study Lie superalgebras in characteristic 2 and mixed characteristic, prove our main results, and provide examples. 

{\bf Acknowledgments.} The authors were partially supported by NSF grant DMS-2001318. 

\section{Superalgebra in characteristic $2$ and mixed characteristic}

\subsection{The algebra $H$ of dual numbers and the Verlinde category}

Let $k$ be a field and $H=k[D]/(D^2)$ be the algebra of dual numbers over $k$. Thus an $H$-module is a $k$-vector space $V$ with a linear map $D: V\to V$ such that $D^2=0$. 
We have an additive functor $\mathcal H: H\operatorname{-mod}\to {\rm Vect}_k$ computing the cohomology of $D$. By the Jordan normal form theorem, the only indecomposable $H$-modules are $k$ (the 1-dimensional $H$-module with $D$ acting by $0$) and $P:=H$ (the projective cover of $k$), and any $H$-module $V$ can be uniquely up to isomorphism written as $V=V_k\otimes k\oplus V_P\otimes P$, where $V_k,V_P$ are $k$-vector spaces. Namely, $V_k\cong \mathcal H(V)$ and $V_P\cong {\rm Im}D$. A finite dimensional $H$-module $V$ is thus determined up to isomorphism by 
two numbers $n_1:=\dim V_k=\dim \mathcal H(V)$ 
and $n_2=\dim V_P={\rm rank}(D)$, and we have 
$$
\dim V=n_1+2n_2,\ \dim{\rm Ker}D=n_1+n_2,\ \dim{\rm Im}D=n_2.
$$

If ${\rm char}(k)=2$ then $H$ is a Hopf algebra with $\Delta(D)=D\otimes 1+1\otimes D$, so 
$H\operatorname{-mod}$ is a monoidal category and the functor $\mathcal H$ is monoidal. Moreover, every triangular R-matrix on $H$ defines a symmetric braiding on this category, making the functor $\mathcal H$ symmetric monoidal. 
In particular, we will be interested in the R-matrix 
\begin{equation}\label{rmat}
{\mathcal R}:=1\otimes 1+D\otimes D
\end{equation} 
considered in \cite{venkatesh_hilbert_2016}. 
The category $H\operatorname{-mod}$ equipped with the corresponding symmetric braiding is the {\it Verlinde category} ${\rm Ver}_4^+(k)$ (\cite{benson_new_2021}, Subsection 4.1).

\subsection{Super-structures on $H$-modules} As in the previous subsection, let $k$ be a field and $H:=k[D]/(D^2)$.

\begin{definition} A {\it super-structure} on an $H$-module $V$ is 
a $\Bbb Z/2$-grading on its cohomology, 
$\mathcal H(V)=\mathcal H_0(V)\oplus \mathcal H_1(V)$. 
\end{definition} 

Equivalently, a super-structure of $V$ is a pair of subspaces 
${\rm Im}D\subset V_0,V_1\subset {\rm Ker}D$ such that 
$V_0\cap V_1={\rm Im}D$, $V_0+V_1={\rm Ker}D$. Namely, the spaces $V_j$ are the preimages of $\mathcal H_j(V)$ in ${\rm Ker}D$. 

A {\it super $H$-module} is an $H$-module equipped with a super-structure. Denote the category of super $H$-modules with morphisms being $H$-maps preserving $V_0,V_1$ by $H\operatorname{-smod}$. This is an additive (but not abelian) category equipped with a forgetful functor $${\rm Forg}: H\operatorname{-smod}\to H\operatorname{-mod}.$$ We have the following indecomposable objects in $H\operatorname{-smod}$: 

1) $k_0$ - the space $k$ with $D=0$ and $(k_{0})_0=k,(k_{0})_1=0$. 

2) $k_1$ - the space $k$ with $D=0$ and $(k_{1})_0=0,(k_{1})_1=k$. 

3) $P=H$, $P_0=P_1={\rm Im}D$. 

We have $\dim \Hom(k_i,k_j)=\delta_{ij}$, $\End P=H$, $\dim \Hom(P,k_i)=\dim \Hom(k_i,P)=1$, ${\rm Forg}(k_i)=k$, ${\rm Forg}(P)=P$. 

These are, in fact, the only indecomposable objects. Namely, we have the following lemma. 

\begin{lemma}\label{decompo} Every object $V\in H\operatorname{-smod}$ can be uniquely 
up to isomorphism written as 
\begin{equation}\label{dec}
V\cong V_{k_0}\otimes k_0\oplus V_{k_1}\otimes k_1\oplus V_P\otimes P
\end{equation}
where $V_{k_0},V_{k_1},V_P$ are $k$-vector spaces. 
\end{lemma}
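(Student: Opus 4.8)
The plan is to build the decomposition \eqref{dec} concretely by choosing compatible splittings, using only that every subspace of a $k$-vector space has a complement. Set $W:=\im D$, so that $W\subseteq\ker D\subseteq V$. First I would pick a complement $U_0$ of $W$ in $V_0$ and a complement $U_1$ of $W$ in $V_1$. Since $V_0\cap V_1=W$, a short computation shows the sum $U_0+U_1+W$ inside $\ker D$ is direct, and since $V_0+V_1=\ker D$ it equals all of $\ker D$; thus $\ker D=U_0\oplus U_1\oplus W$. Next I would choose a subspace $X\subseteq V$ mapped isomorphically onto $W$ by $D$ (the image of a linear section of $D\colon V\twoheadrightarrow W$), so that $V=\ker D\oplus X=U_0\oplus U_1\oplus W\oplus X$ as $k$-vector spaces.

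The next step is to recognize this as a decomposition in $H\operatorname{-smod}$. As $H$-modules, $U_0$ and $U_1$ lie inside $\ker D$ and are therefore trivial, while $W\oplus X$ is $D$-stable (because $DW=0$ and $DX\subseteq W$) and is free of rank $\dim W$: choosing a basis $(y_i)$ of $W$ and lifts $x_i\in X$ with $Dx_i=y_i$ exhibits $W\oplus X\cong\bigoplus_i(kx_i\oplus ky_i)\cong P^{(\dim W)}$. For the super-structure one checks, using $V_0=U_0\oplus W$, $V_1=U_1\oplus W$ and $(W\oplus X)\cap\ker D=W$, that $V_0$ and $V_1$ are the direct sums of their intersections with the three summands; this simultaneously shows that $U_0$ carries the super-structure of $k_0^{(\dim U_0)}$, that $U_1$ carries that of $k_1^{(\dim U_1)}$, and that $W\oplus X$ carries that of $P^{(\dim W)}$ (for which both graded pieces of $\mathcal H$ vanish and $\im D$ has dimension $\dim W$). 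Hence $V\cong V_{k_0}\otimes k_0\oplus V_{k_1}\otimes k_1\oplus V_P\otimes P$ with $V_{k_0},V_{k_1},V_P$ of dimensions $\dim\mathcal H_0(V)$, $\dim\mathcal H_1(V)$, $\operatorname{rank}(D)$.

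For the uniqueness clause — that the multiplicity spaces are determined by $V$ up to isomorphism — I would apply the functors $\mathcal H_0$, $\mathcal H_1$ (the graded pieces of $\mathcal H$) and $\im D$ to an arbitrary decomposition as in \eqref{dec}. Since $\mathcal H_0(k_0)=k$, $\mathcal H_1(k_0)=0$ and symmetrically for $k_1$, while $\mathcal H(P)=0$, and since $\im D$ vanishes on $k_0,k_1$ and is one-dimensional on $P$, one obtains canonical isomorphisms $V_{k_0}\cong\mathcal H_0(V)$, $V_{k_1}\cong\mathcal H_1(V)$, $V_P\cong\im D$. As the right-hand sides depend only on $V$, the spaces $V_{k_0},V_{k_1},V_P$ are determined up to isomorphism.

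The proof is essentially bookkeeping; the one step deserving care is arranging the three splittings so that the super-structure of $V$ is exactly the direct sum of the induced ones on the summands — concretely, that $\ker D=U_0\oplus U_1\oplus\im D$ and that $V_0,V_1$ split along this decomposition — which is where both hypotheses $V_0\cap V_1=\im D$ and $V_0+V_1=\ker D$ enter. (Alternatively, for finite-dimensional $V$ existence could be deduced from Krull--Schmidt together with the list of indecomposables $k_0,k_1,P$, but the direct construction above is just as quick and also covers the infinite-dimensional case.)
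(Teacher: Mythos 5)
Your proof is correct and follows essentially the same route as the paper's: choose a complement of $\ker D$ (your $X$, the paper's $V_P$) and complements of $\im D$ inside $V_0$ and $V_1$ (your $U_0,U_1$, the paper's $V_{k_0},V_{k_1}$), then recover the multiplicity spaces functorially as $\mathcal H_0(V)$, $\mathcal H_1(V)$, $\im D$ for uniqueness. You simply spell out in more detail the directness of $U_0\oplus U_1\oplus\im D$ and the identification of $W\oplus X$ with a multiple of $P$, which the paper leaves implicit.
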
 

\begin{proof} Let $V_P\subset V$ be a complement of ${\rm Ker}D$, and let $V_{k_0},V_{k_1}$ be complements of ${\rm Im}D$ in $V_0,V_1$. 
Then \eqref{dec} holds. Moreover, for every decomposition \eqref{dec}, we have 
$V_{k_j}\cong V_j/{\rm Im}D\cong \mathcal H_j(V)$ for $j=0,1$ and $V_P\cong {\rm Im}D$, 
which implies uniqueness.  
\end{proof} 

In particular, we see that a finite dimensional super $H$-module $V$ is determined 
up to isomorphism by three numbers: $m_0=\dim V_{k_0}=\dim \mathcal H_0(V)$, 
$m_1=\dim V_{k_1}=\dim\mathcal H_1(V)$ and $m_2=\dim V_P$. Then 
$$
\dim V=m_0+m_1+2m_2,\ \dim{\rm Ker}D=m_0+m_1+m_2,
$$
$$
 {\rm dim}V_0=m_0+m_2,\ {\rm dim}V_1=m_1+m_2,\ \dim{\rm Im}D=m_2. 
$$

Now suppose that ${\rm char}(k)=2$, so that $H\operatorname{-mod}$ is a monoidal category. 
In this case, $H\operatorname{-smod}$ is also a monoidal category: we define 
$$
\mathcal H_m(V\otimes W):=\oplus_{i+j=m}\mathcal H_i(V)\otimes \mathcal H_j(W).
$$ 
Moreover, any symmetric braiding on $H\operatorname{-mod}$ defines one 
on $H\operatorname{-smod}$. In particular, for the symmetric structure defined by R-matrix \eqref{rmat}, 
 we obtain the additive symmetric monoidal category  of super objects of ${\rm Ver}_4^+(k)$, 
 denoted by ${\rm sVer}_4^+(k)$. 
 
\subsection{Topologically free modules over a complete DVR}
Let $R$ be a complete DVR with maximal ideal $\mathfrak m$, valuation ${\rm val}: R\to \Bbb Z_{\ge 0}$ and residue field $k:=R/\mathfrak m$. For an $R$-module $M$ and $v\in M$, the image of $v$ in $M/\mathfrak{m}M$ will be denoted by $\ol v$. 
An $R$-module $M$ is said to be {\it topologically free} if it is torsion-free, separated and complete in the $\mathfrak m$-adic topology; that is, $M/\mathfrak{m}^nM$ are free 
$R/\mathfrak{m}^n$-modules, and the natural map $M\to \underleftarrow{\lim}M/\mathfrak{m}^nM$ is an isomorphism. So for finitely generated modules, being topologically free is the same as being free. Note that every morphism between topologically free $R$-modules is automatically $\mathfrak m$-adically continuous. We denote the category of topologically free $R$-modules by $TF(R)$. 

Note that the category $TF(R)$ is a symmetric monoidal category under the operation 
of completed tensor product: for $M,N\in TF(R)$, 
$M\otimes N:=\underleftarrow{\lim}(M/\mathfrak{m}^nM\otimes_RN/\mathfrak{m}^nN)$. 
If $M$ or $N$ is finitely generated, this is just the usual tensor product over $R$. 
We have the symmetric monoidal functor of reduction modulo $\mathfrak{m}$, ${\rm Red}: TF(R)\to {\rm Vect}_k$, sending $M$ to $M/\mathfrak{m}M$.

Given a set $I$, denote by ${\rm Fun}_f(I,R)$ the $R$-module of functions $r: I\to R$ such that 
for every $n\in \Bbb N$ there are only finitely many $i\in I$ with ${\rm val}(r(i))\le n$. It is easy to see that ${\rm Fun}_f(I,R)\in TF(R)$, and ${\rm Fun}_f(I,R)/\mathfrak{m}{\rm Fun}_f(I,R)=
{\rm Fun}_f(I,k)$, the space of functions $I\to k$ with finite support. Thus 
for every $k$-vector space $V$ there exists $M\in TF(R)$ with $M/\mathfrak{m}M\cong V$, namely, $M={\rm Fun}_f(I,R)$ where $I$ labels a basis of $V$. Also, given $M\in TF(R)$ and a basis $\lbrace \ol v_i, i\in I\rbrace$ of $M/\mathfrak{m}M$, pick lifts $v_i\in M$ of $\ol v_i$ for $i\in I$. Then $\lbrace v_i, i\in I\rbrace$ is a topological basis of $M$ in the following sense: every element $v\in M$ can be uniquely written as $v=\sum_{i\in I}r(i)v_i$ where $r\in {\rm Fun}_f(I,R)$, and this identifies $M$ with ${\rm Fun}_f(I,R)$ as an $R$-module. It follows that the functor ${\rm Red}$ is full and bijective on isomorphism classes, and every morphism $\alpha$ such that ${\rm Red}(\alpha)$ is an isomorphism is itself an isomorphism (however, ${\rm Red}$ only admits a splitting if $R$ is equal characteristic, i.e., a $k$-algebra). 
In particular, every surjective morphism of topologically free $R$-modules splits. Thus if $N_1,N_2$ 
are topologically free $R$-modules then ${\rm Ext}^1_R(N_1,N_2)=0$.

\subsection{The algebra $S$ and its modules} 
Let $t\in \mathfrak m$ be a uniformizer and $S := R[d]/(d(d-t))$.
It is clear that $S$ does not depend on the choice of $t$ up to a canonical isomorphism: 
if $t_1,t_2$ are two uniformizers then the corresponding $R$-algebras $S_1,S_2$ are identified by renormalizing $d$ by $t_1/t_2$. 

Let $1^*,d^*$ be the $R$-basis of $\Hom_R(S,R)$ dual to the $R$-basis $1,d$ of $S$.  
Given an $R$-module $N$, consider the $S$-modules 
$N\otimes_R S=N\cdot 1\oplus N\cdot d$ and 
$\Hom_R(S,N)=N\otimes_R \Hom(S,R)=N\cdot 1^*\oplus N\cdot d^*$. The map $x\cdot 1+y\cdot d\mapsto y\cdot 1^*+(x+ty)\cdot d^*$ defines 
an isomorphism of $S$-modules $N\otimes_R S\cong \Hom_R(S,N)$. 
 
 Denote the category of $S$-modules which are topologically free as $R$-modules 
by $TF(S,R)$.
Let $M\in TF(S,R)$. Consider the endomorphism
$\ol d$ of $M/tM$ induced by $d$ (so $\ol d^2=0$). Let $K={\rm Ker}\ol d$ 
and $\ol v\in K$. Fix a lift $v\in M$ of $\ol v$. Then $dv\in tM$, so we may consider 
$w:=t^{-1}dv\in M$. We claim that $\ol w\in M/tM$ belongs to $K$ and is independent on the choice of $v$ modulo $J:={\rm Im}\ol d$. Indeed, $dw=t^{-1}d^2v=dv$, so $\ol d\ol w=\ol d\ol v=0$, 
and if $v'=v+tu$ is another lift of $\ol v$ then $w':=t^{-1}dv'=w+du$, so $\ol w'=\ol w+\ol d\ol u$. 
Also if $\ol v\in J$ then $\ol v=\ol d\ol z$ for some $z\in M$, so taking $v:=dz$, we get 
$w=t^{-1}d^2z=dz$, thus $\ol w=d\ol z\in J$.  This implies that the assignment $\ol v\mapsto \ol w$ is a well defined linear endomorphism $\ol e$ of the cohomology space $\mathcal H=K/J$ such that $\ol e^2=\ol e$. It follows that we have a canonical decomposition $\mathcal H=\mathcal H_0\oplus \mathcal H_1$, where $\mathcal H_0:=(1-\ol e)\mathcal H$ and 
$\mathcal H_1:=\ol e\mathcal H$. We thus obtain 

\begin{lemma}\label{func} For $M\in TF(S,R)$, the space $M/tM$ is naturally a super $H$-module with $D=\overline d$, and this defines an additive functor ${\rm Red}: TF(S,R)\to H\operatorname{-smod}$.  
\end{lemma}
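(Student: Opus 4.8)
The work of constructing the super $H$-module structure on $M/tM$ was essentially done in the paragraph preceding the statement; what remains is to check that this assignment is functorial, i.e., that a morphism $f\colon M\to M'$ in $TF(S,R)$ induces an $H$-module map $\overline f\colon M/tM\to M'/tM'$ that preserves the subspaces $(M/tM)_0, (M/tM)_1$ (equivalently, the $\mathbb Z/2$-grading on cohomology), and that this respects composition and identities. First I would observe that since $f$ is $S$-linear, $\overline f$ commutes with $\overline d = D$, so $\overline f$ is a morphism in $H\operatorname{-mod}$; in particular $\overline f$ carries $K = \operatorname{Ker}\overline d$ into $K' = \operatorname{Ker}\overline d'$ and $J = \operatorname{Im}\overline d$ into $J' = \operatorname{Im}\overline d'$, hence induces a map $\mathcal H(\overline f)\colon \mathcal H \to \mathcal H'$ on cohomology.

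The key step is to show $\mathcal H(\overline f)$ intertwines the idempotents $\overline e$ and $\overline e'$, from which $\mathcal H(\overline f)(\mathcal H_j)\subseteq \mathcal H'_j$ for $j=0,1$ follows immediately. To see this, take $\overline v\in K$ with lift $v\in M$; then $w = t^{-1}dv$ represents $\overline e([\overline v])$ by construction. Now $f(v)\in M'$ is a lift of $\overline f(\overline v)$, and since $f$ is $S$-linear and commutes with multiplication by $t$ (as $t\in R$), we get $t^{-1}d'f(v) = f(t^{-1}dv) = f(w)$, so $f(w)$ represents $\overline e'([\overline f(\overline v)])$. Reducing mod $t$, this says exactly $\mathcal H(\overline f)\circ \overline e = \overline e'\circ \mathcal H(\overline f)$, as desired. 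Finally, functoriality in the categorical sense — $\overline{g\circ f} = \overline g\circ \overline f$ and $\overline{\id_M} = \id_{M/tM}$, and additivity $\overline{f_1+f_2} = \overline{f_1}+\overline{f_2}$ — is immediate from the fact that reduction modulo $t$ (equivalently modulo $\mathfrak m$, since $t$ is a uniformizer) is itself an additive functor, as recorded in the discussion of $\operatorname{Red}\colon TF(R)\to \operatorname{Vect}_k$ in Subsection 2.3.

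The only genuine subtlety — and the one point I would be careful to spell out — is well-definedness of the target structure itself, i.e., that $\overline e$ and hence the grading on $M/tM$ does not depend on auxiliary choices; but this was already established in the paragraph before the lemma (independence of the lift $v$ modulo $J$, and the computation $\overline e^2 = \overline e$), so I would simply invoke it. I do not anticipate a real obstacle here: the statement is a packaging lemma, and once the compatibility $\mathcal H(\overline f)\circ\overline e = \overline e'\circ\mathcal H(\overline f)$ is checked via the one-line $S$-linearity computation above, everything else is formal. I would therefore keep the write-up short, leading with that computation.
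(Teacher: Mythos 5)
Your proposal is correct and takes essentially the same route as the paper, which states the lemma as an immediate consequence of the preceding construction of the idempotent $\overline{e}$ and offers no separate proof. Your explicit verification that $f$ being $S$-linear forces $\mathcal H(\overline f)$ to intertwine $\overline{e}$ and $\overline{e}'$ (via $t^{-1}d'f(v)=f(t^{-1}dv)$, using torsion-freeness of $M'$) is exactly the functoriality check the paper leaves implicit, and the rest is formal as you say.
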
 
 
Let $R_0,R_1$ be the $S$-modules $S/(d),S/(d-t)$ (so both are $R$ as $R$-modules, with $d$ acting by $0,t$ respectively). We have $\Hom(R_i,R_j)\cong R^{\delta_{ij}}$, 
$\End (S)\cong S\cong R^2$, $\Hom(R_i,S)\cong \Hom(S,R_i)\cong R$. 

\begin{lemma} \label{smod_decomp} Let $M$ be an $S$-module which is topologically free over $R$. Then $M$ admits a unique up to isomorphism decomposition
$$
M \cong M_0\otimes_R R_0 \oplus M_1\otimes_R R_1 \oplus M_2\otimes_R S,
$$
where $M_0,M_1,M_2$ are topologically free $R$-modules. Thus, the functor 
$$
{\rm Red}: TF(S,R)\to H\operatorname{-smod}
$$ 
of Lemma \ref{func} is full and bijective on isomorphism classes, 
and every morphism $\alpha$ such that ${\rm Red}(\alpha)$ is an isomorphism is itself an isomorphism.
\end{lemma}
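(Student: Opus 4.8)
The plan is to lift the decomposition of $\mathrm{Red}(M)$ supplied by Lemma~\ref{decompo} up to $M$, and then to deduce the categorical assertions from the corresponding properties of $\mathrm{Red}\colon TF(R)\to\mathrm{Vect}_k$ established earlier. Write $t$ for the uniformizer and $\bar M:=M/tM=\mathrm{Red}(M)\in H\operatorname{-smod}$. First I would construct the candidate: by Lemma~\ref{decompo} write $\bar M\cong W_0\otimes k_0\oplus W_1\otimes k_1\oplus W_2\otimes P$ for suitable $k$-vector spaces $W_i$, choose $M_0,M_1,M_2\in TF(R)$ reducing to $W_0,W_1,W_2$ (possible since $\mathrm{Red}$ is surjective on isomorphism classes over $TF(R)$), and set $N:=M_0\otimes_R R_0\oplus M_1\otimes_R R_1\oplus M_2\otimes_R S\in TF(S,R)$. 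Evaluating the functor of Lemma~\ref{func} on $R_0,R_1,S$ — for $R_0$ the operator $e$ is $0$, for $R_1$ it is the identity, and $S/tS$ has vanishing cohomology — gives $\mathrm{Red}(R_0)=k_0$, $\mathrm{Red}(R_1)=k_1$, $\mathrm{Red}(S)=P$, hence $\mathrm{Red}(N)\cong\bar M$; fix such an isomorphism $\bar\phi$. Note this already shows $\mathrm{Red}\colon TF(S,R)\to H\operatorname{-smod}$ is essentially surjective.

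The heart of the proof is to lift $\bar\phi$ to an $S$-linear map $\phi\colon N\to M$. By the tensor--hom adjunction along $R\hookrightarrow S$ together with $R_0=S/(d)$ and $R_1=S/(d-t)$, giving $\phi$ amounts to giving $R$-linear maps $M_0\to K_0$, $M_1\to K_1$, $M_2\to M$, where $K_0:=\mathrm{Ker}(d\colon M\to M)$ and $K_1:=\mathrm{Ker}(d-t\colon M\to M)$. I would first check $K_0,K_1\in TF(R)$: each is torsion-free (hence $R$-flat) as a submodule of $M$, and closed in $M$ since $d$ is $\mathfrak m$-adically continuous, hence complete and separated; flatness then makes $K_i/\mathfrak m^nK_i$ a flat, hence free, module over the Artinian local ring $R/\mathfrak m^n$. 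Next, $K_0$ and $K_1$ are saturated in $M$ (e.g. $tm\in K_0$ forces $t\,dm=0$, so $dm=0$), so $\mathrm{Red}(K_0),\mathrm{Red}(K_1)$ inject into $\bar M$, and the description of the operator $e$ in Lemma~\ref{func} identifies $\mathrm{Red}(K_0)$ with the subspace $V_0\subseteq\bar M$ and $\mathrm{Red}(K_1)$ with $V_1\subseteq\bar M$ of the super-structure of $\bar M$ (if $dw=0$ then $e$ kills the class of $\bar w$, so $\mathrm{Red}(K_0)\subseteq V_0$; conversely the correction $v\mapsto v-t^{-1}dv$ lands in $K_0$, and every element of $\mathrm{Im}\,\bar d$ has the form $\overline{(d-t)m}$ with $(d-t)m\in K_0$, giving the reverse inclusion). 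Since morphisms in $H\operatorname{-smod}$ preserve the subspaces $V_0,V_1$ by definition, and $W_0\otimes k_0$ lies in the $V_0$-subspace of $\mathrm{Red}(N)$ while $W_1\otimes k_1$ lies in its $V_1$-subspace, $\bar\phi$ carries $W_0\otimes k_0$ into $V_0=\mathrm{Red}(K_0)$ and $W_1\otimes k_1$ into $V_1=\mathrm{Red}(K_1)$; thus its three components factor through $\mathrm{Red}(K_0)$, $\mathrm{Red}(K_1)$, $\mathrm{Red}(M)$, and by fullness of $\mathrm{Red}$ over $TF(R)$ they lift to $R$-linear maps $M_0\to K_0$, $M_1\to K_1$, $M_2\to M$. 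Assembling these gives $\phi$ with $\mathrm{Red}(\phi)=\bar\phi$; since $\mathrm{Red}(\phi)$ is an isomorphism, $\phi$ is an isomorphism of $R$-modules, hence of $S$-modules (its inverse is automatically $S$-linear). This proves the existence of the decomposition, and the same input — that $\mathrm{Red}$ reflects isomorphisms over $TF(R)$, applied to an arbitrary morphism $\alpha$ — gives the last clause of the Lemma. Uniqueness follows by applying $\mathrm{Red}$ and combining the uniqueness in Lemma~\ref{decompo} with bijectivity of $\mathrm{Red}\colon TF(R)\to\mathrm{Vect}_k$ on isomorphism classes; in particular $\mathrm{Red}\colon TF(S,R)\to H\operatorname{-smod}$ is bijective on isomorphism classes.

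For fullness, given $M,M'\in TF(S,R)$ I would decompose both and expand $\mathrm{Red}\colon\Hom_S(M,M')\to\Hom_{H\operatorname{-smod}}(\mathrm{Red}M,\mathrm{Red}M')$ blockwise over the nine pairs of summand types, using $\Hom_S(R_i,R_j)\cong R^{\delta_{ij}}$, $\Hom_S(R_i,S)\cong\Hom_S(S,R_i)\cong R$, $\Hom_S(S,S)\cong S$ and matching these with $\Hom(k_i,k_j)$, $\Hom(k_i,P)$, $\Hom(P,k_i)$ and $\End P$ respectively; surjectivity in each block follows from fullness of $\mathrm{Red}$ over $TF(R)$, and hence so does fullness over $TF(S,R)$.

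The step I expect to be the main obstacle is the middle one: proving $K_0,K_1\in TF(R)$ — the delicate points being completeness (via closedness of these submodules) and freeness of $K_i/\mathfrak m^nK_i$ (via flatness over the Artinian local ring $R/\mathfrak m^n$) — and verifying that the $\mathbb{Z}/2$-grading on the cohomology of $\bar M$ is detected exactly by these two kernels, so that the components of $\bar\phi$ genuinely factor through $\mathrm{Red}(K_0)$ and $\mathrm{Red}(K_1)$. The remaining ingredients — the adjunctions, the $\Hom$-computations over $S$, and the transfer of fullness and reflection of isomorphisms from $TF(R)$ — are routine bookkeeping.
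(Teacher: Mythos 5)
Your proof is correct, but it takes a genuinely different route from the paper's. The paper splits off the free part first: it picks a complement of ${\rm Ker}\,\ol d$ in $M/tM$, lifts a basis to obtain a saturated embedding $M_2\otimes_R S\hookrightarrow M$, splits the resulting extension using Shapiro's lemma together with ${\rm Ext}^1_R=0$ between topologically free modules (and the identification $N\otimes_R S\cong \Hom_R(S,N)$ from the preceding subsection), and then handles the remaining piece, on which $d=te$ with $e$ idempotent, via $M'=(1-e)M'\oplus eM'$. You instead build the model $N$ from the decomposition of ${\rm Red}(M)$ given by Lemma \ref{decompo} and lift the identifying isomorphism directly; the key input, which does not appear in the paper, is that $K_0={\rm Ker}\,d$ and $K_1={\rm Ker}(d-t)$ are saturated, closed, topologically free submodules whose reductions are exactly the subspaces $V_0,V_1$ of the super-structure. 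Your verification of this is sound: saturation gives $K_i\cap t^nM=t^nK_i$, so the subspace and $\mathfrak m$-adic topologies agree and closedness yields completeness, while flatness of torsion-free modules over a DVR gives freeness of $K_i/t^nK_i$ over $R/t^n$; and the inclusions ${\rm Red}(K_0)\subseteq V_0$, $V_0\subseteq{\rm Red}(K_0)$ (via $v\mapsto v-t^{-1}dv$ and ${\rm Im}\,\ol d\subseteq {\rm Red}(K_0)$, using $d^2=td$) both check out, as do the analogues for $K_1$. Your approach avoids the Ext/Shapiro computation and the idempotent trick at the price of this kernel analysis, and it delivers fullness and reflection of isomorphisms more directly, since the lifting of morphisms is built into the construction rather than extracted afterwards from the $\Hom$ computations for $R_0,R_1,S$. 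Both arguments establish uniqueness the same way, by reducing modulo $t$.
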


\begin{proof} We first show the existence of this decomposition. Let $V\subset M/tM$ be a complement of $K:={\rm Ker}\ol d$, and let $\lbrace \overline x_i,i\in I\rbrace$
be a basis of $V$. Let $\overline y_i:=\ol d\overline x_i$. We claim that 
$\lbrace\ol x_i,\ol y_i,i\in I\rbrace$ are linearly independent. Indeed, if $\sum_i b_i\ol y_i=0$ for $b_i\in k$ then 
$\ol d(\sum_i b_i\ol x_i)=0$ so $\sum_i b_i\ol x_i\in V\cap K=0$, i.e. $b_i=0$ for all $i$. 
Hence, if $\sum_i (a_i\ol x_i+b_i\ol y_i)=0$ for $a_i,b_i\in k$ 
then applying $\ol d$ we get $\sum_i a_i\ol y_i=0$, so $a_i=0$ for all 
$i$. Thus $\sum_i b_i\ol y_i=0$, so we also have $b_i=0$ for all $i$, as desired. 

Let $M_2:={\rm Fun}_f(I,R)$. Pick lifts $x_i\in M$ of $\overline x_i$, and let $y_i:=dx_i$ (so $y_i$ are lifts of $\ol y_i$). Since
$\overline x_i,\overline y_i$ are linearly independent, we obtain an inclusion of $S$-modules $M_2\otimes_R S\hookrightarrow M$ with saturated closed image, given by 
$$
r\cdot 1+s\cdot d\mapsto \sum_{i\in I}(r(i)x_i+s(i)y_i).
$$
Therefore, we have a short exact sequence of $S$-modules
    \begin{equation}\label{se1}
    0 \to M_2\otimes_R S \to M \to M' \to 0,
\end{equation}
where $M':=M/(M_2\otimes_R S)$ is a topologically free $R$-module with $\ol d$ 
being the zero map on $M'/tM'$. However, using the Shapiro lemma, 
$$
{\rm Ext}^1_S(M',M_2\otimes_R S)={\rm Ext}^1_S(M',\Hom_R(S,M_2))=
{\rm Ext}^1_R(M',M_2)=0.
$$
Thus the sequence \eqref{se1} splits, and $M\cong M'\oplus M_2\otimes_R S$. 
     
It thus remains to consider the case when $M_2=0$, i.e., $d = 0$ on $M/tM$. Then $d = te$ where $e: M \to M$ and $e^2 = e$. Hence, $M = (1-e)M \oplus eM=(1-e)M\otimes_R R_0\oplus eM\otimes_R R_1$, and we are done.

Finally, let us show the uniqueness of the decomposition, i.e. that the spaces $M_j/tM_j$ are uniquely determined by $M$ up to isomorphism. To this end, note that $M_0/tM_0\cong \mathcal H_0(M/tM)$, $M_1/tM_1\cong \mathcal H_1(M/tM)$, and $M_2/tM_2\cong {\rm Im}\ol d$. 

The last statement follows from Lemma \ref{decompo}. 
 \end{proof}   
  
\subsection{Mixed supervector spaces}
    
From now on suppose that $R$ has characteristic $0$ and perfect residue field $k$ of characteristic $2$, and assume that $2$ generates the ideal $\mathfrak{m}^2$. 
In other words, $R$ is a ramified quadratic extension of the ring of Witt vectors $W(k)$; for example, $R=W(k)[\sqrt{2}]$. Then we can pick a uniformizer $t$ so that $\frac{t^2}{2}\in 1+tR$. Let $Q_R$ be the field of fractions of $R$. 

Consider the group algebra $Q_R[\Bbb Z/2]$ generated by a grouplike element $g$ 
with $g^2=1$, and triangular structure given by the $R$-matrix
$$
\wt{\mathcal R}:=\frac{1}{2}(1\otimes 1+1\otimes g+g\otimes 1-g\otimes g).
$$
The symmetric monoidal category $(Q_R[\Bbb Z/2],\wt{\mathcal R})\operatorname{-mod}$ 
is thus the category of supervector spaces over $Q_R$, 
${\rm sVect}_{Q_R}$.

Let $d:=t\frac{1-g}{2}$, so that $g=1-2t^{-1}d$. Then we have 
$$
d(d-t)=0,\ 
\Delta(d)=d\otimes 1+1\otimes d-2t^{-1}d\otimes d,
$$
and 
$$
\wt{\mathcal R}=1\otimes 1-2t^{-2}d\otimes d.
$$
These formulas define a triangular $R$-Hopf algebra structure on $S$ 
(independent on the choice of $t$ up to isomorphism). Moreover, we have $S\otimes_R Q_R\cong (Q_R[\Bbb Z/2],\wt{\mathcal R})$ whose category of modules is 
${\rm sVect}_{Q_R}$, while $S/tS$ is the triangular Hopf algebra 
$(H,\mathcal R)$ whose category of modules is the category ${\rm Ver}_4^+(k)$. 

\begin{definition} The category $TF(S,R)$ with symmetric monoidal structure defined by $\wt{\mathcal R}$ is called the category of {\it mixed supervector spaces} over $R$ and denoted 
${\rm MixsVect}_R$. 
\end{definition} 

Thus, if $M$ is a mixed supervector space over $R$ then $M\otimes_R Q_R$ 
is a usual supervector space over $Q_R$, and if $M$ is $R$-finite with parameters 
$(m_0,m_1,m_2)$ then the superdimension of $M\otimes_R Q_R$ is $(m_0+m_2,m_1+m_2)$. 
On the other hand, $M/tM\in {\rm sVer}_4^+(k)$, with the same parameters 
$m_0,m_1,m_2$ for $R$-finite $M$. We obtain 

\begin{proposition}\label{rigVer} The functor ${\rm Red}$ of Lemma \ref{func} has a natural structure of a symmetric monoidal functor ${\rm MixsVect}_R\to {\rm sVer}_4^+(k)$. 
\end{proposition}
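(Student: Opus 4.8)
The plan is to construct the monoidal structure on ${\rm Red}$ by hand and then verify the coherence and symmetry axioms. For $M,N\in{\rm MixsVect}_R$, reduction of the completed tensor product $M\otimes_R N$ modulo $\mathfrak m=(t)$ gives the canonical $k$-linear isomorphism
\[
J_{M,N}\colon (M/tM)\otimes_k(N/tN)\ \xrightarrow{\ \sim\ }\ (M\otimes_R N)/t(M\otimes_R N),
\]
which is precisely the (strong) symmetric monoidal structure of the reduction functor $TF(R)\to{\rm Vect}_k$; in particular it is already an isomorphism of $k$-vector spaces. The first step is to check that $J_{M,N}$ is $H$-linear. Here $d$ acts on $M\otimes_R N$ through $\Delta_S(d)=d\otimes1+1\otimes d-2t^{-1}d\otimes d$, and since $\mathfrak m^2=(2)$ and $\mathfrak m=(t)$ we have $2t^{-1}\in tR$, so the last term acts by $0$ on $(M\otimes_R N)/t(M\otimes_R N)$; thus $\ol d_{M\otimes_R N}=\ol d_M\otimes1+1\otimes\ol d_N=\Delta_H(D)$ and $J_{M,N}$ intertwines the $H$-actions. (This is just the statement that $S/tS=H$ as Hopf algebras.) Since $\mathcal H\colon H\operatorname{-mod}\to{\rm Vect}_k$ is monoidal, $J_{M,N}$ then identifies $\mathcal H(M/tM)\otimes_k\mathcal H(N/tN)$ with $\mathcal H\big((M\otimes_R N)/t(M\otimes_R N)\big)$.

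The heart of the matter is compatibility with the $\Bbb Z/2$-gradings of Lemma~\ref{func}: one must show $J_{M,N}$ carries $\bigoplus_{i+j=m}\mathcal H_i(M/tM)\otimes\mathcal H_j(N/tN)$ onto $\mathcal H_m\big((M\otimes_R N)/t(M\otimes_R N)\big)$. Since the grading is cut out by the idempotent $\ol e$ of Lemma~\ref{func}, this amounts to the identity
\[
\ol e_{M\otimes_R N}=\ol e_M\otimes1+1\otimes\ol e_N
\]
on $\mathcal H(M/tM)\otimes_k\mathcal H(N/tN)$ (the right-hand side is idempotent in characteristic $2$ and projects onto $\mathcal H_1(M)\otimes\mathcal H_0(N)\oplus\mathcal H_0(M)\otimes\mathcal H_1(N)$). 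To prove the identity I would evaluate both sides on a class $[\ol v]\otimes[\ol u]$ with $\ol v\in{\rm Ker}\,\ol d_M$ and $\ol u\in{\rm Ker}\,\ol d_N$, lift to $v\in M$ and $u\in N$, and use that $dv\in tM$ and $du\in tN$, so that $w_M:=t^{-1}dv$ and $w_N:=t^{-1}du$ are defined and represent $\ol e_M[\ol v]$ and $\ol e_N[\ol u]$. Then $v\otimes u$ lifts $\ol v\otimes\ol u$, and $\ol v\otimes\ol u\in{\rm Ker}\,\ol d_{M\otimes_R N}$, while
\[
t^{-1}\,d_{M\otimes_R N}(v\otimes u)=w_M\otimes u+v\otimes w_N-2t^{-2}\,dv\otimes du ;
\]
the last term equals $-2\,w_M\otimes w_N$, which lies in $t(M\otimes_R N)$ because $2\in\mathfrak m^2$, so it vanishes modulo $t$, giving $\ol e_{M\otimes_R N}([\ol v]\otimes[\ol u])=[\ol{w_M}]\otimes[\ol u]+[\ol v]\otimes[\ol{w_N}]$, as needed. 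Tracking the idempotent $\ol e$ through the completed tensor product in this way, and seeing the cross term die because $2\in\mathfrak m^2$, is the only genuinely non-formal point.

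It then remains to verify the coherence axioms, which is routine bookkeeping. Naturality of $J$ in $M$ and $N$ is immediate; the pentagon and unit triangles for ${\rm Red}$ follow from those for $TF(R)\to{\rm Vect}_k$ together with coassociativity and counitality of $\Delta_S$, once one notes that the monoidal unit of ${\rm MixsVect}_R$ is $R_0$ (the counit $\varepsilon\colon S\to R$ sends $d\mapsto0$) and that ${\rm Red}(R_0)=k_0$, the unit of ${\rm sVer}_4^+(k)$. For symmetry, the braiding of ${\rm MixsVect}_R$ is $\tau\circ\wt{\mathcal R}$ with $\wt{\mathcal R}=1\otimes1-2t^{-2}d\otimes d$; since $t^2/2\in1+tR$, the element $2t^{-2}$ is a unit of $R$ reducing to $1$, so $\ol{\wt{\mathcal R}}=1\otimes1+\ol d\otimes\ol d=\mathcal R$, and hence $J$ intertwines the braidings of ${\rm MixsVect}_R$ and ${\rm sVer}_4^+(k)$. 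As $J$ preserves both $D$ and the $\Bbb Z/2$-grading in both directions, it is an isomorphism in ${\rm sVer}_4^+(k)$, so ${\rm Red}$ acquires the structure of a symmetric monoidal functor.
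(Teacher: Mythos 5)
Your proof is correct and follows the route the paper intends: the paper states this proposition without explicit proof, deriving it from the preceding observation that $S/tS\cong (H,\mathcal R)$ as triangular Hopf algebras, which is exactly your reduction of $\Delta_S(d)$ and of $\wt{\mathcal R}$ modulo $t$ using $2\in\mathfrak m^2$ and $t^2/2\in 1+tR$. Your explicit verification that $\ol e_{M\otimes_R N}=\ol e_M\otimes 1+1\otimes \ol e_N$ on cohomology --- i.e., that the $\Bbb Z/2$-grading of Lemma \ref{func} on the reduction of $M\otimes_R N$ agrees with the tensor-product grading in ${\rm sVer}_4^+(k)$ --- is carried out correctly and supplies precisely the one non-formal detail the paper leaves implicit.
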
 

\section{Lie superalgebras} 

\subsection{Lie superalgebras in characteristic $2$.} 
Let $k$ be a field of characteristic $2$. Recall the following definition. 

\begin{definition}\label{supe} (\cite{bouarroudj_vectorial_2020} Subsection 1.2.3,  \cite{bouarroudj_classification_2023} Subsection 2.2)  
    A Lie superalgebra over $k$ is a $\mbb{Z}/2$-graded Lie algebra $L= L_0 \oplus L_1$ over $k$ together with a quadratic map $Q: L_1 \to L_0$ such that for $y_1,y_2 \in L_1,x\in L$ we have
$$
        [y_1, y_2] = Q(y_1 + y_2) - Q(y_1) - Q(y_2),\
        [Q(y), x] = [y, [y, x]].
$$
    The super universal enveloping algebra of $L$ is then 
    $$
    U_{{\rm super}}(L) = U(L) / (y^2 - Q(y), y \in L_1).
    $$
\end{definition}

The following theorem is well known, and is a special case of Theorem \ref{pbwth} below. 

\begin{theorem}[PBW theorem]
    Let $L$ be a Lie superalgebra over $k$. Then the natural surjective map 
    $$
    SL_0 \otimes \wedge L_1\to \gr U_{{\rm super}}(L)
    $$
    is an isomorphism. 
\end{theorem}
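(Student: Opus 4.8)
The plan is to deduce this classical PBW statement from the more general Theorem~\ref{pbwth} for Lie superalgebras in ${\rm Ver}_4^+(k)$, as the excerpt explicitly indicates, so I would first explain the reduction. Given a Lie superalgebra $L = L_0 \oplus L_1$ with squaring map $Q$ in the sense of Definition~\ref{supe}, I would package it as a Lie superalgebra in ${\rm Ver}_4^+(k)$ that is \emph{classical} in the terminology of the introduction, i.e.\ the underlying super-object has $m_2 = 0$ (so $L$ lies in ${\rm Vect}_k \subset {\rm Ver}_4^+(k)$, with $D = 0$ and the $\mathbb{Z}/2$-grading on $\mathcal{H}(L) = L$ given by $L_0 \oplus L_1$). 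The bracket on $L$ together with the data of $Q$ should be exactly what is needed to specify the bracket morphism in ${\rm Ver}_4^+(k)$: on the summand $k_1 \otimes k_1$ of $\Lambda^2 L$, the ``bracket'' in the Verlinde-categorical sense incorporates the squaring map, while on the other summands it is the ordinary Lie bracket. I would check that the two identities in Definition~\ref{supe} (bilinearity-defect of $Q$ equals the bracket, and $[Q(y),x] = [y,[y,x]]$) are precisely the Jacobi-type axioms of a Lie superalgebra in ${\rm Ver}_4^+(k)$ specialized to the classical case, and that the PBW condition ``$[x,x]=0$ whenever $Dx=0$'' holds here because $D=0$ forces us into the convention where $[x,x]$ for $x$ of odd degree is governed by $Q$ rather than required to vanish — one must be slightly careful that in characteristic $2$ the notion of Lie algebra includes the relation $[x,x]=0$ and see how the odd part evades it; this is the content of why $\wedge L_1$ rather than $SL_1$ appears.

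Next I would match up the enveloping algebras. The general definition of $U(L)$ for $L$ in ${\rm Ver}_4^+(k)$, when specialized to a classical $L$, should reproduce $U_{\rm super}(L) = U(L)/(y^2 - Q(y), y \in L_1)$: the tensor-categorical enveloping algebra is a quotient of the tensor algebra $TL$ by the ideal generated by $x \otimes y - \sigma(x \otimes y) - [x,y]$ where $\sigma$ is the braiding, and for elements of the $k_1$-isotypic part the braiding $\sigma$ is the symmetric one but the relation on $y \otimes y$ becomes $2y^2 = $ (something), which in the presence of the squaring datum is encoded as $y^2 = Q(y)$. So I would verify that this categorical $U(L)$ is literally $U_{\rm super}(L)$ with its standard filtration, and that the associated graded map $SL \to \gr U(L)$ from Theorem~\ref{pbwth}, where $SL = S(\mathcal{H}_0) \otimes \wedge(\mathcal{H}_1) \otimes (\text{contribution of } P)$, reduces when $m_2 = 0$ to exactly $SL_0 \otimes \wedge L_1 \to \gr U_{\rm super}(L)$.

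Having set up this dictionary, the theorem is immediate from Theorem~\ref{pbwth}: that theorem asserts the corresponding map is an isomorphism for all Lie superalgebras in ${\rm Ver}_4^+(k)$, so in particular for the classical ones. Thus the ``proof'' is really a verification that Definition~\ref{supe} is the $m_2 = 0$ case of the general definition and that the two enveloping algebras and their $S$-objects coincide under that identification.

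The main obstacle I anticipate is bookkeeping around the squaring map: in characteristic $2$ one cannot literally write $Q(y) = \tfrac12[y,y]$, and the general categorical framework in ${\rm Ver}_4^+(k)$ presumably encodes the ``divided'' relation $y^2 = Q(y)$ through the structure of the object $P$ or through an auxiliary piece of data on the bracket morphism; making the comparison precise requires unwinding how the categorical bracket restricts to the classical summands and checking that no extra relation or extra freedom appears. A secondary subtlety is confirming that the $S$-object $SL$ of Theorem~\ref{pbwth} genuinely degenerates to $SL_0 \otimes \wedge L_1$ when there is no $P$-part — i.e.\ that the $\wedge$ on the degree-$1$ cohomology, rather than a symmetric or mixed construction, is what the general theorem produces. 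Once these identifications are pinned down, no further argument is needed.
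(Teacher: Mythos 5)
Your proposal is correct and is exactly the paper's approach: the paper offers no separate proof, simply noting that the classical statement is the special case of Theorem~\ref{pbwth} where $L$ is classical ($D=0$, $m_2=0$), so that $SL/(L_1^2)=SL_0\otimes\wedge L_1$ and the two enveloping algebras coincide by definition. One small clarification to your dictionary: in the paper's formalism the squaring map $Q$ is a separate piece of data on a Lie superalgebra in ${\rm Ver}_4^+(k)$ rather than being encoded in the categorical bracket morphism, and the condition $Q(x')=[x,x]$ with $D=0$ is precisely what forces $[x,x]=0$ on all of $L$ (odd part included), with $\wedge L_1$ arising only from the imposed relation $y^2=Q(y)$ in $U_{\rm super}(L)$.
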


\subsection{Operadic Lie algebras in ${\rm Ver}_4^+$}

Recall the following definition from \cite{etingof_koszul_2018}. 

\begin{definition} 
    An operadic Lie algebra in a symmetric tensor category $\mathcal C$ is an object $L$ of $\mathcal C$ equipped with a skew-symmetric bracket $\beta: L\otimes L \to L$, i.e.
\begin{equation*}
    \beta \circ (\id + (12)) = 0
\end{equation*}
such that $\beta$ satisfies the Jacobi identity, i.e.
\begin{equation*}
    \beta \circ (\beta \otimes \id) \circ (\id + (123) + (132)) = 0.
\end{equation*}
\end{definition} 

The adjective ``operadic" refers to the fact that we only impose the axioms 
of the Lie operad, so e.g. for $\mathcal{C}={\rm Vect}_k$ with ${\rm char}(k)=2$
we do not require that $[x,x]=0$. 

Thus, an operadic Lie algebra in ${\rm Ver}_4^+(k)$ is an object $L\in {\rm Ver}_4^+(k)$ with bilinear bracket $[,]: L\times L\to L$
such that 
$$
    [x, y] = [y, x] + [y', x'] ,
    $$
    $$
    [[x, y], z] + [[z, x], y] + [[y, z], x] + [[x', y], z'] + [[z', x], y'] + [[y', z], x']=0, 
    $$
    $$
    [x, y]' = [x', y] + [x, y'], 
    $$
    where $x':=Dx$.

\subsection{Operadic mixed Lie superalgebras} 

\begin{definition}\label{operadic} An {\it operadic mixed Lie superalgebra} over $R$ is a Lie algebra in the symmetric monoidal category ${\rm MixsVect}_R$. 
\end{definition}  

Thus, concretely, an operadic mixed Lie superalgebra is a topologically free $R$-module 
$\mfrak{g}$ with an endomorphism $d: \mfrak{g}\to \mfrak{g}$ such that $d(d-t)=0$ and 
a bracket operation $[,]: \mfrak{g} \otimes \mfrak{g} \to \mfrak{g}$ such that
$$
    [x, y] = -[y,x ] + 2t^{-2}[dy, dx],
    $$
    $$ 
    [[x, y], z] + [[z, x], y] + [[y, z], x] -2t^{-2}([[dz, dx], y] + [[dz, x], dy] + [[dy, z], dx]+[[y,dz],dx])=0
 $$   
 and
    $$
d[x, y] = [dx, y] + [x, dy] - 2t^{-1}[dx, dy].
$$

We see that the axioms for an operadic mixed Lie superalgebra reduce modulo $t$ to the axioms for an operadic Lie algebra in ${\rm Ver}_4^+(k)$. Thus we have
\begin{proposition}
    If $\mfrak{g}$ is an operadic mixed Lie superalgebra, then $\overline{\mfrak{g}}:=\mfrak{g}/t\mathfrak{g}$ is an operadic Lie algebra in ${\rm Ver}_4^+(k)$.
\end{proposition}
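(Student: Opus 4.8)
The plan is to deduce the proposition from Proposition~\ref{rigVer} together with the general principle that a symmetric monoidal functor sends Lie algebra objects to Lie algebra objects. The notion of operadic Lie algebra in a symmetric monoidal category $\mathcal{C}$ is precisely that of an algebra over the Lie operad in $\mathcal{C}$: the skew-symmetry and Jacobi identities are equalities of morphisms assembled from the bracket, identity maps, and the symmetry constraints of $\mathcal{C}$. Hence, if $F\colon\mathcal{C}\to\mathcal{D}$ is symmetric monoidal and $(L,\beta)$ is an operadic Lie algebra in $\mathcal{C}$, then $F(L)$, equipped with the bracket $F(L)\otimes F(L)\xrightarrow{\ \sim\ }F(L\otimes L)\xrightarrow{\,F(\beta)\,}F(L)$, is an operadic Lie algebra in $\mathcal{D}$, since $F$ carries the defining commutative diagrams in $\mathcal{C}$ to the corresponding ones in $\mathcal{D}$.

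I would then apply this with $F$ the composite of the symmetric monoidal functor ${\rm Red}\colon{\rm MixsVect}_R\to{\rm sVer}_4^+(k)$ of Proposition~\ref{rigVer} with the (strict symmetric monoidal) forgetful functor ${\rm sVer}_4^+(k)\to{\rm Ver}_4^+(k)$; equivalently, $F$ is reduction modulo $t$, which is symmetric monoidal because $S/tS=(H,\mathcal{R})$ and $\wt{\mathcal{R}}$ reduces to $\mathcal{R}$. By Definition~\ref{operadic}, an operadic mixed Lie superalgebra $\mfrak{g}$ is precisely a Lie algebra object in ${\rm MixsVect}_R$, so $F(\mfrak{g})=\overline{\mfrak{g}}$ (with $D=\overline{d}$, noting $\overline{d}^{\,2}=0$ since $d(d-t)=0$) is an operadic Lie algebra in ${\rm Ver}_4^+(k)$, with bracket induced from that of $\mfrak{g}$. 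This is exactly the claim.

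Alternatively, and more explicitly, one may simply reduce the three displayed axioms for $\mfrak{g}$ modulo $t$. The only real bookkeeping concerns the scalar coefficients: since $2$ generates $\mfrak{m}^2=(t^2)$ and $t$ is normalized so that $t^2/2\in 1+tR$, the element $2t^{-2}$ is a unit of $R$ reducing to $1\in k$, while $2t^{-1}\in\mfrak{m}$ reduces to $0$. Thus the mixed skew-symmetry $[x,y]=-[y,x]+2t^{-2}[dy,dx]$ reduces to $[x,y]=[y,x]+[y',x']$, the mixed derivation axiom reduces to $d[x,y]=[dx,y]+[x,dy]$, i.e.\ $[x,y]'=[x',y]+[x,y']$, and the mixed Jacobi identity reduces to the Jacobi identity of ${\rm Ver}_4^+(k)$ in the form recorded above, using $\overline{d}^{\,2}=0$ and, where needed, the already-reduced skew-symmetry to rewrite the twisted triple brackets. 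I expect no genuine obstacle; the substantive content — that reduction modulo $t$ respects the symmetry constraint, so that the braided axioms match up — is already packaged in Proposition~\ref{rigVer}, and the present proposition is a formal consequence.
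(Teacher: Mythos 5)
Your proposal is correct, and your ``alternative'' explicit argument is precisely the paper's proof: the paper simply observes that the three displayed axioms for an operadic mixed Lie superalgebra reduce modulo $t$ to the three displayed axioms in ${\rm Ver}_4^+(k)$, using exactly the scalar bookkeeping you record ($2t^{-2}$ is a unit reducing to $1$ since $t^2/2\in 1+tR$, while $2t^{-1}\in\mathfrak m$ reduces to $0$). Your primary, categorical formulation---symmetric monoidal functors preserve algebras over the Lie operad, applied to ${\rm Red}$ from Proposition~\ref{rigVer} composed with the forgetful functor to ${\rm Ver}_4^+(k)$---is a cleaner packaging of the same observation; its one concrete advantage is that it dispenses with the minor check you correctly flag at the end, namely that the reduced Jacobi identity and the form of the Jacobi identity listed for ${\rm Ver}_4^+(k)$ differ by a rewriting of the twisted triple brackets via skew-symmetry and $D^2=0$, since both are coordinate expressions of the single operadic identity $\beta\circ(\beta\otimes\id)\circ(\id+(123)+(132))=0$ in the respective categories.
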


This motivates the adjective ``operadic" in Definition \ref{operadic}. 

\subsection{The PBW condition} 

Recall (\cite{etingof_koszul_2018}) that the {\it universal enveloping algebra} $U(L)$ 
of an operadic Lie algebra $L$ in $\mathcal C$ 
is the quotient of the tensor algebra $TL$ by the ideal $I_L$ generated by 
the image of the morphism ${\rm id}-\sigma-[,]:L\otimes L\to TL$, 
and that $L$ is called a (genuine) {\it Lie algebra} if the natural map 
$L\to U(L)$ is injective (the PBW condition). In this case, as shown in \cite{etingof_koszul_2018}, the natural surjective map $SL\to {\rm gr}U(L)$ is an isomorphism (the PBW theorem). 

\begin{theorem}\label{PBW} (\cite{kaufer_superalgebra_2018},\cite{hu_lialg_2025}) An operadic Lie algebra $L$ 
in ${\rm Ver}_4^+(k)$ is a Lie algebra if and only if $[x,x]=0$ for every $x\in L$ such that $x'=0$. 
\end{theorem}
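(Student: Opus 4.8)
The plan is to prove the two implications separately. The forward direction is immediate. Suppose $L$ is a Lie algebra, i.e.\ $L\inj U(L)$, and let $x\in L$ satisfy $x'=0$. Since the $R$-matrix is $1\otimes 1+D\otimes D$, the braiding $\sigma$ fixes $x\otimes x$ (the correction term $Dx\otimes Dx$ vanishes), so the image of $\mathrm{id}-\sigma-[,]$ on $x\otimes x\in L^{\otimes 2}$ is $x\cdot x-x\cdot x-[x,x]=-[x,x]$. Hence $[x,x]=0$ in $U(L)$, and since $[x,x]\in L$ and $L$ embeds into $U(L)$, we get $[x,x]=0$ in $L$.

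For the converse, assume $[x,x]=0$ for every $x\in L$ with $x'=0$; by \cite{etingof_koszul_2018} it suffices to show $L\to U(L)$ is injective, and I would do this by exhibiting an explicit $k$-basis of $U(L)$ via Bergman's diamond lemma. Using the description of $L$ as an $H$-module, choose a basis $\{z_a\}_{a\in A}\cup\{x_b\}_{b\in B}\cup\{y_b\}_{b\in B}$ of $L$ with $Dz_a=0$, $Dx_b=y_b\ne 0$, $Dy_b=0$, with $A,B$ well-ordered. Unwinding the definitions, $U(L)$ is the associative $k$-algebra on these generators modulo the relations $vu=uv+(Du)(Dv)+[v,u]$ for generators $u\le v$; the only ``diagonal'' relation not automatically satisfied is $y_b^2=[x_b,x_b]$, since the diagonal relations at $z_a$ and $y_b$ read $[z_a,z_a]=0$ and $[y_b,y_b]=0$, which hold by hypothesis -- this is exactly where the PBW condition is used. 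On the other hand $SL$ has as a $k$-basis the ordered monomials in $\{z_a,x_b,y_b\}$ in which each $y_b$ occurs at most once (this follows from $S(V\oplus W)\cong SV\otimes SW$ together with $S\one=k[z]$ and $SP=k[x]\otimes\wedge[y]$). I would then run the diamond lemma for the degree-lexicographic order in which the $z_a$ precede all $x_b,y_b$ and each $y_b$ immediately follows $x_b$. The rewriting rules strictly decrease the leading monomial: the only nonzero correction $(Du)(Dv)$ is $y_by_{b'}$ with $b<b'$ (appearing in the rule rewriting $x_{b'}x_b$), and $y_by_{b'}<x_{b'}x_b$ in this order, while the diagonal correction $y_b^2$ for $u=v=x_b$ is rewritten by $y_b^2\mapsto[x_b,x_b]$, of lower degree, as are the terms $[v,u]$.

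The main work -- and the main obstacle -- is resolving the overlap ambiguities. The triple overlaps $w_3w_2w_1$ with $w_3>w_2>w_1$ distinct generators reduce, after a lengthy but routine computation, to the (braided) Jacobi identity together with skew-symmetry; these are formally the same ambiguities as in the ordinary PBW theorem and they close up using only the operadic Lie axioms. The genuinely new ambiguities involve the quadratic relation $y_b^2=[x_b,x_b]$: for example, the overlap $y_by_by_b$ requires $[[x_b,x_b],y_b]=0$, which I would derive from the Jacobi identity applied to $(x_b,x_b,y_b)$ after simplifying with $[y_b,y_b]=0$ and $[x_b,y_b]=[y_b,x_b]$ (both consequences of the hypothesis and skew-symmetry), and the mixed overlaps $y_by_b\,u$ with $u<y_b$ require analogous identities, all of which follow from Jacobi, skew-symmetry, $D[x_b,x_b]=0$ and the hypothesis. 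Once every ambiguity is resolved, the diamond lemma shows that the ordered monomials with each $y_b$ to the first power form a $k$-basis of $U(L)$; since these monomials lift a basis of $SL$ and the degree-one ones are a basis of $L$, we conclude simultaneously that $L\inj U(L)$ and, invoking \cite{etingof_koszul_2018}, that $SL\cong\gr U(L)$. (When $L$ lifts to a mixed Lie superalgebra over $R$ one can alternatively deduce PBW from the classical characteristic-zero case by reduction mod $\mathfrak m$, but this does not cover all $L$, so the direct argument above is needed in general.)
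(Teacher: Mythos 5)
The paper does not actually prove Theorem \ref{PBW}: it is quoted from \cite{kaufer_superalgebra_2018} and \cite{hu_lialg_2025}, so there is no internal proof to compare against; I can only assess your argument on its merits. Your forward direction is complete and correct: for $x'=0$ the braiding fixes $x\otimes x$, so $[x,x]$ lies in the ideal $I_L$ and hence vanishes in $L$ once $L\inj U(L)$. Your converse is the standard direct route, and the basis you produce (ordered monomials in which each $y_b$ occurs at most once) is exactly the one the paper invokes later in the proofs of Theorem \ref{pbwth} and the restricted PBW theorem. The genuinely new points are handled correctly: your deglex order with $y_b$ immediately after $x_b$ does make every rule decreasing (in particular $y_by_{b'}<x_{b'}x_b$), the only diagonal relation surviving the hypothesis is $y_b^2=[x_b,x_b]$, and the identity $[[x_b,x_b],y_b]=0$ needed for the $y_b^3$ ambiguity does follow exactly as you say from the Jacobi identity at $(x_b,x_b,y_b)$ together with $[y_b,y_b]=0$ and $[x_b,y_b]=[y_b,x_b]$ (the latter because $[x,y]+[y,x]=[y',x']$ and $y_b'=0$); I also checked that the mixed ambiguity $y_by_bx_{b'}$ reduces to $[[x_b,x_b],x_{b'}]=[[y_b,x_{b'}],y_b]$, which follows from two applications of Jacobi, so your claim that these all close up is right.

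Two small things you gloss over, neither a genuine gap. First, you list the ambiguities $y_by_bu$ with $u<y_b$ but not the mirror ones $u\,y_by_b$ with $u>y_b$; they resolve by symmetric computations. Second, two reductions you use implicitly deserve a sentence: the hypothesis $[x,x]=0$ need only be checked on basis vectors of $\ker D$ because the alternator is twisted-linear there ($[x+y,x+y]=[x,x]+[y,y]+[y',x']$), and the relation coming from $u\otimes v$ is implied by the one from $v\otimes u$ together with the one from $Du\otimes Dv$ (using $[u,v]+[v,u]=[v',u']$), so it is legitimate to keep only one rewriting rule per unordered pair. The remaining triple-overlap verifications with distinct letters are asserted rather than carried out, but they are the usual PBW computation with $D$-corrections and do go through; the proposal is a correct proof strategy.
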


Theorem \ref{PBW} implies that if $\mfrak{g}$ is an operadic mixed Lie superalgebra 
then $\ol{\mfrak{g}}=\mathfrak{g}/t\mathfrak{g}$ need not be a Lie algebra in ${\rm Ver}_4^+(k)$ 
(i.e., need not satisfy the PBW condition).
This is demonstrated by the following example. 

\begin{eg}\label{coun}
    Consider the 2-dimensional operadic mixed Lie superalgebra $\mfrak{g}$ with basis $x, y$ with $dx = 0$, $dy = ty$, and nonzero bracket $[y, y] = x$ (it has $m_0 = m_1 = 1, m_2 = 0$). Then $\ol{\mfrak{g}}$ is  not a Lie algebra in ${\rm Ver}_4^+(k)$ (only an operadic one), as $y' = 0$ but $[y, y] \ne 0$.
\end{eg}

\subsection{Mixed Lie superalgebras}

What happens in Example \ref{coun} is the following. Of course, since $R$ has characteristic zero, $\mfrak{g}$ must satisfy the PBW condition, i.e., the natural map $\mfrak{g}\to U(\mfrak{g})$ is injective.\footnote{Here $U(\mfrak{g}):=T\mfrak{g}/I_{\mfrak{g}}$, where the quotient is taken in the category of $R$-modules.} However, this map is not saturated (i.e., its cokernel is not torsion free as an $R$-module), and as a result the reduced map 
$\ol{\mfrak g}\to U(\ol{\mfrak{g}})$ fails to be injective. 
Indeed, $U(\mfrak{g})$ is the algebra generated by $x, y$ with relations $xy = yx$, $2y^2 = x$, so $U(\mfrak{g}) = R[y]$ and the reduced map $\ol{\mfrak{g}}\to U(\ol{\mfrak{g}})$ 
maps $x$ to $0$. 

This motivates the following definition. 

\begin{definition}
    Let $\mfrak{g}$ be an operadic mixed Lie superalgebra. We say that $\mfrak{g}$ is a (genuine) {\it mixed Lie superalgebra} if for all $y \in \mfrak{g}$ such that $dy = ty$, we have $[y, y] \in 2\mfrak{g}$. 
\end{definition}

\begin{proposition} Let $\mathfrak{g}$ be an operadic mixed Lie superalgebra over $R$. Then the following conditions are equivalent. 

(i) $\mfrak{g}$ is a mixed Lie superalgebra;

(ii) The natural injective map $\iota: \mfrak{g}\to U(\mfrak{g})$ is saturated;

(iii) $\ol{\mfrak{g}}$ is a Lie algebra in ${\rm Ver}_4^+(k)$ (i.e. it satisfies the PBW condition).
\end{proposition}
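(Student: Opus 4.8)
The plan is to prove the cycle of implications (i) $\Rightarrow$ (ii) $\Rightarrow$ (iii) $\Rightarrow$ (i), using the structure theory for $TF(S,R)$-modules from Lemma \ref{smod_decomp} and the $\Ext^1$-vanishing for topologically free $R$-modules established in Subsection 2.3. Throughout, recall that $\mfrak{g}\in TF(S,R)$ decomposes as $\mfrak{g}\cong \mfrak{g}_0\otimes_R R_0\oplus \mfrak{g}_1\otimes_R R_1\oplus \mfrak{g}_2\otimes_R S$; an element $y$ with $dy=ty$ is (up to adding something in $\Int d$ and scaling) one coming from the $R_1$-isotypic part, and its reduction $\bar y\in\ol{\mfrak g}$ satisfies $D\bar y=0$ but need not be in $\im D$; these are precisely the elements whose squares are constrained by the PBW condition in Theorem \ref{PBW}.

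For (i) $\Rightarrow$ (ii): the cokernel $C=\coker\iota$ sits in an exact sequence $0\to\mfrak g\to U(\mfrak g)\to C\to 0$. Using the filtration on $U(\mfrak g)$ and the fact that $\gr U(\mfrak g)\cong S\mfrak g$ (which holds because $R$ has characteristic zero, so $\mfrak g\otimes_R Q_R$ is an honest Lie superalgebra over $Q_R$ and ordinary super-PBW applies rationally), one reduces to checking that the degree-$\le 2$ piece is saturated. Concretely, $U(\mfrak g)$ in filtration degree $2$ is $\mfrak g\oplus (\text{degree-2 part})$, and torsion in the cokernel can only arise from relations of the form $y^2 = \tfrac12[y,y]$ for $y$ with $dy=ty$: saturation at such an element fails exactly when $[y,y]\notin 2\mfrak g$, since then $y^2$ generates a strictly larger saturated submodule than the image of $\mfrak g$. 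So the mixed Lie superalgebra condition $[y,y]\in 2\mfrak g$ is precisely what makes $\iota$ saturated. I would make this precise by choosing the decomposition basis $\{z_j,x_i,y_i,\tilde y_\ell\}$ (with $dz_j=0$, $dx_i=y_i$, $d\tilde y_\ell=t\tilde y_\ell$) and computing $\gr$ of the relevant low-degree part of $U(\mfrak g)$ directly.

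For (ii) $\Rightarrow$ (iii): if $\iota$ is saturated, then $\coker\iota$ is topologically free over $R$, hence (Subsection 2.3) $\iota$ reduces mod $t$ to an injection $\ol{\mfrak g}=\mfrak g/t\mfrak g\hookrightarrow U(\mfrak g)/tU(\mfrak g)$. One then identifies $U(\mfrak g)/tU(\mfrak g)$ with $U(\ol{\mfrak g})$ (the enveloping algebra in $\Rep H$): indeed $U(\mfrak g)=T\mfrak g/I_{\mfrak g}$, the functor $\mfrak g\mapsto T\mfrak g$ commutes with $-\otimes_R k$ on topologically free modules, and the defining relations of $I_{\mfrak g}$ reduce to those of $I_{\ol{\mfrak g}}$ — this uses that $\mfrak g\otimes_R\mfrak g$ is topologically free so $I_{\mfrak g}$ is a summand-type submodule and $\Ext^1$ obstructions vanish. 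Thus the composite $\ol{\mfrak g}\to U(\mfrak g)/tU(\mfrak g)=U(\ol{\mfrak g})$ is injective, which is exactly the PBW condition for $\ol{\mfrak g}$.

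For (iii) $\Rightarrow$ (i): this is the contrapositive of the phenomenon in Example \ref{coun}. Suppose $\mfrak g$ is not a mixed Lie superalgebra, so there is $y$ with $dy=ty$ and $[y,y]\notin 2\mfrak g$; I may assume $y$ is a basis element $\tilde y_\ell$ of the decomposition and that $\tfrac12[y,y]$ has a nonzero reduction $\overline{\tfrac12[y,y]}\in\ol{\mfrak g}$ — more carefully, one picks $y$ so that $t^{-1}\cdot(\text{relevant coefficient})$ is a unit. In $U(\mfrak g)$ one has the relation $y^2=\tfrac12[y,y]$ (valid since $2$ is invertible... no: rather, $2y^2-[y,y]=0$ is the defining relation, and since $[y,y]\in 2\mfrak g$ fails, $2$ does not divide $[y,y]$, yet $2y^2=[y,y]$ in $U(\mfrak g)$, forcing $y^2\in U(\mfrak g)$ with $2y^2=[y,y]$). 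Reducing mod $t$: in $U(\ol{\mfrak g})$ we get $2\bar y^2=0$ automatically ($2=0$), but the element $\overline{[y,y]}$, which is nonzero in $\ol{\mfrak g}\subset$ degree-$\le1$ filtration, must equal $2\bar y^2=0$ in $U(\ol{\mfrak g})$ — wait, more precisely $[y,y]=2y^2$ in $U(\mfrak g)$ means $[y,y]\in 2U(\mfrak g)$, and if $[y,y]\notin 2\mfrak g$ then $\overline{[y,y]}\ne 0$ in $\ol{\mfrak g}$ yet maps into $2U(\mfrak g)/2tU(\mfrak g)\cap\cdots$; cleaner: $\bar y^2$ makes sense in $U(\mfrak g)/tU(\mfrak g)$ and $2\bar y^2=\overline{[y,y]}$, so $\overline{[y,y]}\in 2\big(U(\mfrak g)/tU(\mfrak g)\big)$, but its preimage $[y,y]$ is not in $2\mfrak g+t\mfrak g=2\mfrak g$ (as $t\mfrak g\subset 2\mfrak g$ would force, using $\mfrak m^2=(2)$... ) — I will sort out the exact divisibility bookkeeping using $\tfrac{t^2}{2}\in 1+tR$. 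The upshot is that $\overline{[y,y]}$, a nonzero element of $\ol{\mfrak g}$, lies in the kernel of $\ol{\mfrak g}\to U(\ol{\mfrak g})$, so the PBW condition fails, exactly as for $U(\mfrak g)=R[y]$ in Example \ref{coun}.

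The main obstacle I expect is (ii) $\Rightarrow$ (iii), specifically the clean identification $U(\mfrak g)/tU(\mfrak g)\cong U(\ol{\mfrak g})$ together with the claim that a saturated $\iota$ reduces to an injection on the nose — this requires being careful that passing to the quotient $T\mfrak g\to U(\mfrak g)$ and then mod $t$ really produces $U(\ol{\mfrak g})$ and not some further quotient, which hinges on the ideal $I_{\mfrak g}$ being generated by a topologically free summand so that reduction mod $t$ is exact on the relevant short exact sequences. The divisibility bookkeeping in (iii) $\Rightarrow$ (i) (distinguishing $2\mfrak g$ from $t\mfrak g$, using that $2$ generates $\mfrak m^2$ and $t$ generates $\mfrak m$) is the other place where care is needed, but it is routine once organized.
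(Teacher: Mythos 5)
You prove the cycle in the opposite direction from the paper, which proves (ii)$\Rightarrow$(i), (iii)$\Rightarrow$(ii), (i)$\Rightarrow$(iii); two of your three arrows are fine, but your (i)$\Rightarrow$(ii) has a genuine gap. Your claim that ``torsion in the cokernel can only arise from relations of the form $y^2=\tfrac12[y,y]$'' is precisely the content of an integral PBW theorem for mixed Lie superalgebras, and you do not have one. Rational PBW over $Q_R$ only controls $\gr U(\mfrak{g})\otimes_R Q_R$ and says nothing about saturation, which is exactly the integral question at stake; and a ``direct computation of the low-degree part of $\gr U(\mfrak{g})$'' must control $I_{\mfrak{g}}\cap(R\oplus\mfrak{g}\oplus\mfrak{g}^{\otimes 2})$ including contributions pushed down from higher degree by the Jacobi relations, i.e.\ it is a diamond-lemma/PBW argument, not a routine check. (Note that the paper only invokes ``the PBW theorem for $\mfrak{g}$'' \emph{after} this proposition, so you cannot use it as a black box here without circularity.) The paper avoids this entirely: its (i)$\Rightarrow$(iii) never touches $U(\mfrak{g})$. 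One lifts $y_0\in\ol{\mfrak{g}}$ with $Dy_0=0$ to $y=u+v$ with $du=0$, $dv=tv$; skew-symmetry plus torsion-freeness gives $[u,u]=0$, the mixed condition gives $[v,v]\in 2\mfrak{g}\subset t\mfrak{g}$, the cross terms cancel mod $2$, so $[y_0,y_0]=0$, and Theorem \ref{PBW} then yields the PBW condition for $\ol{\mfrak{g}}$. The fix for your write-up is to reverse your cycle (or substitute this argument for your (i)$\Rightarrow$(ii)), so that the only nontrivial input is the already-established characteristic-$2$ criterion of Theorem \ref{PBW}.

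Your other two implications are essentially sound and agree with the paper's up to contraposition. For (ii)$\Rightarrow$(iii): torsion-freeness of $\coker\iota$ gives vanishing of the relevant Tor, hence injectivity of $\ol{\mfrak{g}}\to U(\mfrak{g})/tU(\mfrak{g})$, and the identification $U(\mfrak{g})/tU(\mfrak{g})\cong U(\ol{\mfrak{g}})$ that worries you needs no $\mathrm{Ext}$-vanishing: it is just right-exactness of reduction applied to $T\mfrak{g}\to U(\mfrak{g})$, since the generators of $I_{\ol{\mfrak{g}}}$ are the reductions of the generators of $I_{\mfrak{g}}$. For (iii)$\Rightarrow$(i), your divisibility bookkeeping does close up: writing $[y,y]=t^nz$ with $z\notin t\mfrak{g}$ and $n\in\{0,1\}$ (possible since $2\mfrak{g}=t^2\mfrak{g}$ up to a unit), the relation $2y^2=[y,y]$ gives $\iota(z)=t^{2-n}(\mathrm{unit})\,y^2\in tU(\mfrak{g})$, so $\ol{z}\neq 0$ dies in $U(\ol{\mfrak{g}})$ and PBW fails; this is exactly the composite of the paper's (iii)$\Rightarrow$(ii) and (ii)$\Rightarrow$(i).
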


\begin{proof} (ii) $\implies$ (i): Let $y\in \mfrak{g}$, $dy=ty$. Then in $U(\mfrak{g})$ we have a relation $[y,y]=2y^2$, so since $\iota$ is saturated, we must have $[y,y]\in 2\mfrak{g}$.

(iii) $\implies$ (ii): It is easy to see that $U(\ol{\mfrak{g}})=U(\mfrak{g})/tU(\mfrak{g})$. Suppose $\coker \iota$ has torsion. Then there exists an element $x \in \mfrak{g}$ such that $\ol{x} \in \ol{\mfrak{g}}$ is nonzero, but $\iota(x) \in tU(\mfrak{g})$, so $\ol{x} \mapsto 0 \in U(\ol{\mfrak{g}})$. Therefore, $\overline{\mfrak{g}}$ does not satisfy the PBW condition. 

(i) $\implies$ (iii):
Suppose $\mfrak{g}$ is a mixed Lie superalgebra, and let $y_0\in \overline{\mfrak{g}}$ be such that $Dy_0=0$. By the classification of $S$-modules, this means that $y_0$ admits a lift $y=u+v,u,v\in \mfrak{g}$ with $du=0$ and $dv=tv$, so that $[u,u]=0$ and $[v,v]=2w$, $w\in \mathfrak{g}$. Let $u_0,v_0$ be the reductions 
of $u,v$ over $k$. Since $Du_0=0$, we have $[u_0,v_0]=[v_0,u_0]$, so $[y_0,y_0]=[u_0,u_0]+[v_0,v_0]=0$, as desired. 
\end{proof} 

\begin{eg} Let $A$ be an associative algebra in ${\rm MixsVect}_R$. 
Then $A$ has a natural structure of an operadic mixed Lie superalgebra with 
commutator 
$$
[x,y]=x\cdot y-y\cdot x+2t^{-2}dy\cdot dx.
$$
Moreover, it is a mixed Lie superalgebra because if $dy=ty$ then 
$$
[y,y]=2t^{-2}(dy)^2=2y^2\in 2A.
$$
For example, if $A=\underline{\rm End}(V)=V\otimes V^*$ for a finite dimensional 
mixed superspace $V$ 
then the Lie algebra $(A,[,])$ is denoted by $\mathfrak{gl}(V)$. 
If $V=n_0R_0\oplus n_1R_1\oplus n_2S$ 
then we denote $\mathfrak{gl}(V)$ by $\mathfrak{gl}(n_0|n_1|n_2)(R)$. 
It is easy to see that the parameters of this mixed Lie superalgebra are 
$$
(m_0,m_1,m_2)=(n_0^2+n_1^2,2n_0n_1,n_2(2n_0+2n_1+n_2)).
$$ 
\end{eg} 

\subsection{The squaring map of a mixed Lie superalgebra} \begin{definition}
    Let $\mfrak{g}$ be a mixed Lie superalgebra over $R$. Let $\mfrak{g}_0 = {\rm Ker}d$ and $\mfrak{g}_1 = {\rm Ker} (d - t)$. Then we have a natural quadratic map $q: \mfrak{g}_1 \to \mfrak{g}_0$ defined as 
    $$
    q(y) = \frac{1}{2}[y, y].
    $$ 
    \end{definition}
\begin{definition}
    The reduced enveloping algebra of $\mfrak{g}$ is
    $$
    U_{\rm red}(\mfrak{g})= U(\mfrak{g})/(y^2 - q(y), y \in \mfrak{g}_1).
    $$
\end{definition}

Note that elements $y^2 - q(y)$ are $2$-torsion. 

\begin{proposition} The torsion in $U(\mfrak{g})$ is generated by $y^2 - q(y)$, $y\in \mfrak{g}_1$, and $U_{\rm red}(\mfrak{g})$ is a torsion-free $R$-module.
\end{proposition}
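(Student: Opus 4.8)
The plan is to reduce both assertions to the single statement that $U_{\rm red}(\mathfrak{g})$ is a torsion-free $R$-module, and then to prove that by a Poincaré--Birkhoff--Witt argument. For the reduction: for $y\in\mathfrak{g}_1$ the straightening relation $a\cdot b=\sigma(a,b)+[a,b]$ in $U(\mathfrak{g})$ specializes (using $dy=ty$ and $\widetilde{\mathcal R}=1\otimes 1-2t^{-2}d\otimes d$) to $2y^2=[y,y]=2q(y)$, so each $y^2-q(y)$ is annihilated by $2$, as already noted. Hence the two-sided ideal $I\subseteq U(\mathfrak{g})$ generated by the elements $y^2-q(y)$, $y\in\mathfrak{g}_1$, consists of torsion elements (recall $R$ is central), so $I$ lies in the torsion submodule $T$ of $U(\mathfrak{g})$. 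Since $U(\mathfrak{g})/I=U_{\rm red}(\mathfrak{g})$, if $U_{\rm red}(\mathfrak{g})$ is torsion-free then $T\subseteq I$, whence $T=I$; this is the first assertion, and it yields the second because $U(\mathfrak{g})/T$ is automatically torsion-free.

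To prove $U_{\rm red}(\mathfrak{g})$ torsion-free I would fix an $R$-basis of $\mathfrak{g}$ adapted to the decomposition $\mathfrak{g}\cong M_0\otimes_R R_0\oplus M_1\otimes_R R_1\oplus M_2\otimes_R S$ of Lemma~\ref{smod_decomp}: vectors $z_i$ with $dz_i=0$, vectors $w_j$ with $dw_j=tw_j$, and vectors $x_l$ for which $y_l:=dx_l$ satisfies $dy_l=ty_l$, so that $\{w_j\}\cup\{y_l\}$ is an $R$-basis of $\mathfrak{g}_1$. Totally order the generators with the odd ones $w_j,y_l$ last. Using the straightening relations together with $w_j^2=q(w_j)$ and $y_l^2=q(y_l)$ (from which $y^2=q(y)$ for all $y\in\mathfrak{g}_1$ follows by bilinearity), one checks that $U_{\rm red}(\mathfrak{g})$ is spanned over $R$ by the ordered monomials in the $z_i$, the $x_l$, and squarefree ordered products of the $w_j,y_l$; the straightening terminates since each elementary move either lowers the tensor degree or trades two even factors for two odd ones while the odd-degree stays bounded. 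Now equip $U_{\rm red}(\mathfrak{g})$ with the filtration by tensor degree. Its associated graded is a quotient of $S\mathfrak{g}$, and since $y^2-q(y)$ has leading term $y^2$ in degree $2$, it is a quotient of $S\mathfrak{g}/(y^2:y\in\mathfrak{g}_1)$. Using $S(A\oplus B)\cong SA\otimes SB$ and the decomposition above — and observing that $2t^{-2}$ is a unit in $R$, so the braiding introduces no denominators — one computes $S\mathfrak{g}/(y^2:y\in\mathfrak{g}_1)\cong \Sym_R(M_0)\otimes\wedge_R(M_1)\otimes\big(S(M_2\otimes_R S)/(y_l^2:l)\big)$, and each factor is a free $R$-module with an explicit basis of ordered monomials (polynomial, exterior, and a braiding-twisted polynomial-times-exterior algebra, respectively). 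Thus $\gr U_{\rm red}(\mathfrak{g})$ is free over $R$, so — the filtration being exhaustive and concentrated in nonnegative degrees — $U_{\rm red}(\mathfrak{g})$ is torsion-free.

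The main obstacle is upgrading the surjection $S\mathfrak{g}/(y^2:y\in\mathfrak{g}_1)\twoheadrightarrow\gr U_{\rm red}(\mathfrak{g})$ to an isomorphism — equivalently, showing the spanning set of ordered monomials is $R$-linearly independent, i.e. that the leading-term ideal of the defining relations is generated by the quadratic symbols $y^2$. I would do this by reducing modulo $t$: since $U(\mathfrak{g})/tU(\mathfrak{g})=U(\overline{\mathfrak{g}})$ and (because $\mathfrak{g}$ is a mixed Lie superalgebra, so $[y,y]\in 2\mathfrak{g}$ and one checks $q(y+tz)\equiv q(y)\bmod t\mathfrak{g}$) the map $q$ descends to a squaring map $\overline q$ on $\overline{\mathfrak{g}}_1=\overline{\mathfrak{g}_1}$, we obtain $U_{\rm red}(\mathfrak{g})/tU_{\rm red}(\mathfrak{g})\cong U(\overline{\mathfrak{g}})/(\overline y^2-\overline q(\overline y):\overline y\in\overline{\mathfrak{g}}_1)$, the super enveloping algebra of the Lie superalgebra $\overline{\mathfrak{g}}$ in ${\rm Ver}_4^+(k)$. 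Invoking the PBW theorem in characteristic $2$ for this algebra (Theorem~\ref{pbwth}), the reductions of the ordered monomials form a $k$-basis; then a nontrivial $R$-linear relation among the ordered monomials in $U_{\rm red}(\mathfrak{g})$, divided by the largest power of $t$ dividing all its coefficients and reduced modulo $t$, would produce a nontrivial $k$-linear relation — a contradiction. This reduction-mod-$t$ comparison with the characteristic-$2$ PBW theorem is the heart of the matter; the termination of straightening, the symmetric-algebra computation, and the descent of $q$ are all routine.
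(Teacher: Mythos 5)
Your proof is correct, but it takes a genuinely different route from the paper's. The paper invokes the PBW isomorphism $S\mathfrak{g}\cong{\rm gr}\,U(\mathfrak{g})$ over $R$ to reduce everything to the abelian case, then uses $S(A\oplus B)\cong SA\otimes SB$ to reduce further to the three indecomposable $S$-modules $R_0$, $R_1$, $S$, where the torsion is read off from three explicit small algebras. You instead prove the stronger statement that $U_{\rm red}(\mathfrak{g})$ is topologically free with an explicit PBW basis (equivalently, that $S\mathfrak{g}/(\mathfrak{g}_1^2)\to{\rm gr}\,U_{\rm red}(\mathfrak{g})$ is an isomorphism), getting linear independence by reducing mod $t$ and invoking Theorem \ref{pbwth} for $U_{\rm super}(\ol{\mathfrak{g}})\cong U_{\rm red}(\mathfrak{g})/tU_{\rm red}(\mathfrak{g})$. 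This yields a mixed-characteristic PBW theorem for $U_{\rm red}$ as a byproduct and makes transparent why the answer matches the characteristic-$2$ picture; the cost is a forward dependence on material proved later in the paper (the Lie superalgebra structure on $\ol{\mathfrak{g}}$ with squaring map $\ol q$, and Theorem \ref{pbwth}) --- there is no circularity, since neither of those depends on this proposition, but the ordering should be flagged. Your opening reduction ($I\subseteq T$ because the generators are $2$-torsion and $R$ is central, and $T\subseteq I$ once $U(\mathfrak{g})/I$ is torsion-free) correctly makes explicit what the paper leaves implicit. One small remark on your symmetric-algebra computation: since $2t^{-2}$ is a unit, the relation $({\rm id}-\sigma)(x\otimes x)=2t^{-2}\,dx\otimes dx$ already forces $y_l^2=0$ in $S(M_2\otimes_R S)$ (correspondingly, $y^2=q(y)$ holds automatically in $U(\mathfrak{g})$ for $y\in{\rm Im}\,d$), so all genuine torsion comes from the $R_1$-summands and your quotient by $(y_l^2)$ in the third factor is redundant rather than wrong.
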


\begin{proof}
From the PBW theorem for $\mfrak{g}$, we have $S\mfrak{g} \cong \on{gr} U(\mfrak{g})$. Thus it suffices to prove the statement for abelian $\mfrak{g}$, in which case $U(\mfrak{g})=S\mfrak{g}$. Thus it suffices to prove the statement when $\mfrak{g}$ 
is an indecomposable $S$-module. If $\mfrak{g}=R$ and $d=0$ then $S\mfrak{g}=R[z]$ and there is no torsion. If $\mfrak{g}=R$ with $d=t$ then $S\mfrak{g}=R[z]/(2z^2)$, so the torsion is generated by $z^2$ and the quotient by torsion is $U_{\rm red}(\mfrak{g})=R[z]/(z^2)$. Finally, if $\mfrak{g}=S$ with basis $u,v$ such that $du=v, dv=tv$, then $S\mfrak{g}=R\langle u,v\rangle /(uv-vu+2t^{-1}v^2,2v^2)$. It is easy to see that the torsion in this algebra is generated by $v^2$ and the quotient by this torsion is $U_{\rm red}(\mfrak{g})=R[u,v]/(v^2)$, as desired. 
\end{proof} 

\subsection{Lie superalgebras in ${\rm Ver}_4^+(k)$}
Suppose now that $\mfrak{g}$ is a mixed Lie superalgebra and let $\ol{\mfrak{g}}: = \mfrak{g}/t\mfrak{g}$ be its corresponding Lie algebra in ${\rm Ver}_4^+(k)$. Let 
$\ol{\mfrak{g}}_0: = \mfrak{g}_0/t\mfrak{g}_0$ and 
$\ol{\mfrak{g}}_1: = \mfrak{g}_1/t\mfrak{g}_1$. 
Recall that by Proposition \ref{rigVer}, the subspaces $\ol{\mfrak{g}}_0,\ol{\mfrak{g}}_1$ define a super-structure of $\ol{\mfrak{g}}$, i.e., $\ol{\mfrak{g}}_0\cap \ol{\mfrak{g}}_1={\rm Im}D$
and $\ol{\mfrak{g}}_0+\ol{\mfrak{g}}_1={\rm Ker}D$, and $[\ol{\mfrak{g}}_i,\ol{\mfrak{g}}_j]\subset \ol{\mfrak{g}}_{i+j}$. Also the quadratic map $q$ 
defines a quadratic map 
$$
\ol{q}: \ol{\mfrak{g}}_1 \to \ol{\mfrak{g}}_0
$$
such that 
$$
\ol{q}(x')=[x,x],\ x\in \ol{\mfrak{g}},
$$ 
    $$
    \ol{q}(y_1 + y_2) - \ol{q}(y_1) - \ol{q}(y_2) = [y_1, y_2],
    $$
and 
$$
[\ol{q}(y),x] = [y,[y,x]],\ x\in \ol{\mfrak{g}}.
$$ 

\begin{eg} 1. {\it The classical case.} Suppose that $d(\mfrak{g})\subset t\mfrak{g}$, i.e., $m_2=0$. Then we have $\ol{\mfrak{g}}\in {\rm Vect}_k\subset {\rm Ver}_4^+(k)$, 
so $D=0$ and $\ol{\mfrak{g}}=\ol{\mfrak{g}}_0\oplus \ol{\mfrak{g}}_1$ is an ordinary 
$\Bbb Z/2$-graded Lie algebra. Moreover, we have a quadratic map  
$$
\ol{q}: \ol{\mfrak{g}}_1 \to \ol{\mfrak{g}}_0
$$
such that  
    $$
    \ol{q}(y_1 + y_2) - \ol{q}(y_1) - \ol{q}(y_2) = [y_1, y_2],
    $$
and 
$$
[\ol{q}(y),x] = [y,[y,x]].
$$ 
Thus $\ol{\mathfrak{g}}$ is exactly a Lie superalgebra over $k$ in the sense of Definition \ref{supe}. 

2. {\it The pure case.} Suppose that $m_1=0$, i.e., if $dy=ty$, $y\in \mfrak{g}$ 
then $y=dz$ for some $z\in \mfrak{g}$. Then $\mfrak{g}$ is a usual Lie algebra 
in ${\rm Ver}_4^+(k)$, and $\ol{\mfrak{g}}_0={\rm Ker}D$, $\ol{\mfrak{g}}_1={\rm Im}D$, so 
the data of $\overline q$ is redundant. 
\end{eg} 
 
This motivates the following generalization of Definition \ref{supe}:
\begin{definition}
    A {\it Lie superalgebra in ${\rm Ver}_4^+(k)$} is a Lie algebra $L$ in ${\rm Ver}_4^+(k)$ with a super-structure $L_0, L_1$ such that $[L_i,L_j]\subset L_{i+j}$, 
    and a quadratic map $Q: L_1 \to L_0$ such that 
    $$
    Q(x')=[x,x],\ x\in L
    $$ and 
    \begin{align*}
        Q(y_1 + y_2) - Q(y_1) - Q(y_2) = [y_1, y_2],\ y_1,y_2\in L_1, \\
        [Q(y), x] = [y, [y, x]],\ y \in L_1, x \in L.
    \end{align*}
    We say that $L$ is {\it classical} if $D=0$ (i.e., $L\in {\rm Vect}_k$) and that $L$ is {\it pure} if 
    $L_1={\rm Im}D,L_0={\rm Ker}D$. 
\end{definition}

We thus obtain 

\begin{proposition} If $\mfrak{g}$ is a mixed Lie superalgebra over $R$ then 
$\ol{\mfrak{g}}:=\mfrak{g}/t\mfrak{g}$ has a natural structure of a Lie superalgebra in ${\rm Ver}_4^+(k)$. 
\end{proposition}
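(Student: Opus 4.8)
The plan is to produce, on $\ol{\mfrak g}=\mfrak g/t\mfrak g$, the three pieces of data demanded by the definition of a Lie superalgebra in ${\rm Ver}_4^+(k)$: the structure of a Lie algebra in ${\rm Ver}_4^+(k)$, the super-structure $\ol{\mfrak g}_0,\ol{\mfrak g}_1$ with $[\ol{\mfrak g}_i,\ol{\mfrak g}_j]\subset\ol{\mfrak g}_{i+j}$, and the quadratic map $\ol q\colon\ol{\mfrak g}_1\to\ol{\mfrak g}_0$ satisfying its three identities. The first of these is already available: $\ol{\mfrak g}$ is an operadic Lie algebra in ${\rm Ver}_4^+(k)$ (the axioms reduce modulo $t$), and since $\mfrak g$ is a \emph{genuine} mixed Lie superalgebra it satisfies the PBW condition by the proposition above. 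So the remaining work is to construct the extra data and verify the identities, in every case by the same mechanism: lift the relevant elements to $\mfrak g$, compute there with the deformed axioms, and reduce modulo $t$.

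For the super-structure, put $\mfrak g_0={\rm Ker}\,d$, $\mfrak g_1={\rm Ker}(d-t)$ and $\ol{\mfrak g}_j:=\mfrak g_j/t\mfrak g_j$. I would first note that $\mfrak g_0,\mfrak g_1$ are saturated $R$-submodules of $\mfrak g$, since $\mfrak g/\mfrak g_0\cong d\mfrak g$ and $\mfrak g/\mfrak g_1\cong(d-t)\mfrak g$ are submodules of the torsion-free module $\mfrak g$; hence $\ol{\mfrak g}_0,\ol{\mfrak g}_1$ genuinely inject into $\ol{\mfrak g}$. Tracing these subspaces through the $S$-module decomposition of Lemma \ref{smod_decomp} (equivalently, through the idempotent $\ol e$ of Lemma \ref{func} and Proposition \ref{rigVer}) identifies them with the subspaces cutting out the super-object ${\rm Red}(\mfrak g)\in{\rm sVer}_4^+(k)$, so $\ol{\mfrak g}_0\cap\ol{\mfrak g}_1={\rm Im}\,D$ and $\ol{\mfrak g}_0+\ol{\mfrak g}_1={\rm Ker}\,D$. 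The bracket compatibility follows from the third axiom $d[a,b]=[da,b]+[a,db]-2t^{-1}[da,db]$: for $a,b\in\mfrak g_0$ it gives $d[a,b]=0$; for $a\in\mfrak g_0$, $b\in\mfrak g_1$ it gives $d[a,b]=t[a,b]$, so $[a,b]\in\mfrak g_1$; and for $a,b\in\mfrak g_1$ the cross terms cancel, $t[a,b]+t[a,b]-2t^{-1}[ta,tb]=2t[a,b]-2t[a,b]=0$, so $[a,b]\in\mfrak g_0=\mfrak g_{1+1}$.

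For the quadratic map, define $q\colon\mfrak g_1\to\mfrak g_0$ by $q(y)=\frac12[y,y]$. This is exactly where the genuine (not merely operadic) hypothesis enters: when $dy=ty$ one has $[y,y]\in 2\mfrak g$, so $q(y)$ lies in $\mfrak g$, and $d[y,y]=0$ shows it lies in $\mfrak g_0$. Since $q$ is quadratic with polarization $[\,\cdot\,,\,\cdot\,]$ and $q(ty)=\frac{t^2}{2}[y,y]=t^2q(y)$, one gets $q(y+tz)\equiv q(y)\pmod t$, so $q$ descends to a quadratic map $\ol q\colon\ol{\mfrak g}_1\to\ol{\mfrak g}_0$. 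Among its three axioms, the additivity identity holds already over $R$: expanding $q(y_1+y_2)$ and using skew-symmetry with $dy_i=ty_i$, namely $[y_2,y_1]=-[y_1,y_2]+2t^{-2}[ty_1,ty_2]=[y_1,y_2]$, yields $q(y_1+y_2)-q(y_1)-q(y_2)=[y_1,y_2]$ in $\mfrak g_0$. For $\ol q(x')=[x,x]$: lift $x$ to $\tilde x$, note $d\tilde x\in\mfrak g_1$, and apply skew-symmetry with both entries $\tilde x$ to get $[\tilde x,\tilde x]=t^{-2}[d\tilde x,d\tilde x]$; then $q(d\tilde x)=\frac{t^2}{2}[\tilde x,\tilde x]\equiv[\tilde x,\tilde x]\pmod t$ since $\frac{t^2}{2}\in 1+tR$, and as $\ol{d\tilde x}=x'$ this is precisely $\ol q(x')=[x,x]$.

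The last axiom $[\ol q(y),x]=[y,[y,x]]$ is the main obstacle. Lifting $y$ to $\tilde y\in\mfrak g_1$ and $x$ to $\tilde x\in\mfrak g$ (and writing $[\tilde y,\tilde y]=2q(\tilde y)$ so that $[q(\tilde y),\tilde x]$ makes sense as $\frac12[[\tilde y,\tilde y],\tilde x]$), it reduces to showing $\frac12[[\tilde y,\tilde y],\tilde x]\equiv[\tilde y,[\tilde y,\tilde x]]\pmod t$. I would substitute the triple $(\tilde y,\tilde y,\tilde x)$ into the deformed Jacobi identity, simplify repeatedly using $d\tilde y=t\tilde y$ and $d(d\tilde x)=t\,d\tilde x$, and use skew-symmetry to reduce both sides to combinations of $[[d\tilde x,\tilde y],\tilde y]$ and $[[\tilde y,\tilde x],\tilde y]$; a short computation then shows $\frac12[[\tilde y,\tilde y],\tilde x]-[\tilde y,[\tilde y,\tilde x]]=4t^{-1}[[d\tilde x,\tilde y],\tilde y]$. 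The decisive point is that $4t^{-1}\in tR$: since $2$ generates $\mfrak m^2=(t^2)$ we have $2=t^2v$ for a unit $v$, hence $4=t^4v^2$ and $4t^{-1}=t^3v^2$. Therefore the difference lies in $t\mfrak g$, the identity holds in $\ol{\mfrak g}$, and assembling all of the above equips $\ol{\mfrak g}$ with the asserted Lie superalgebra structure. I expect this Jacobi computation — and in particular recognizing the residual term as a multiple of $t$ through $2\in\mfrak m^2$ — to be the only step that goes beyond routine bookkeeping with the axioms and the decomposition lemmas.
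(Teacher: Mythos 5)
Your proposal is correct and follows exactly the route the paper takes: the paper states this proposition as an immediate consequence of the (unproved) assertions in the preceding subsections — that $\ol{\mfrak g}_0,\ol{\mfrak g}_1$ give a super-structure compatible with the bracket and that $q=\frac12[\cdot,\cdot]$ descends to a map $\ol q$ satisfying the three identities — and your write-up supplies precisely those verifications. In particular your Jacobi computation checks out (the residual term is indeed $4t^{-1}[[d\tilde x,\tilde y],\tilde y]$, killed because $4t^{-1}\in t^3R$), so this is a correct filling-in of details the paper leaves implicit rather than a different argument.
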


In this situation we say that $\mfrak{g}$ is a {\it lift} of $\ol{\mfrak{g}}$ over $R$, and $\ol{\mfrak{g}}$ is the {\it reduction} of $\mfrak{g}$ over $k$.

\begin{eg} Let $\ol{A}$ be an associative algebra in ${\rm sVer}_4^+(k)$. 
Then $\ol{A}$ has a natural structure of a Lie algebra with 
commutator 
$$
[x,y]=x\cdot y-y\cdot x+y'\cdot x'.
$$
Moreover, we have a quadratic map $Q: \ol{A}_1\to \ol{A}_0$ given by $Q(y)=y^2$, 
which gives $\ol{A}$ a natural structure of a Lie superalgebra in ${\rm Ver}_4^+(k)$. 

For example, if $\ol{A}=\underline{\rm End}(V)=V\otimes V^*$ for a finite dimensional 
super-object $V\in {\rm Ver}_4^+(k)$ 
then the Lie algebra $(\ol{A},[,])$ is denoted by $\mathfrak{gl}(V)$. 
If $V=n_0k_0\oplus n_1k_1\oplus n_2P$ 
then we denote $\mathfrak{gl}(V)$ by $\mathfrak{gl}(n_0|n_1|n_2)(k)$. 
It is easy to see that the superdimension of this Lie superalgebra in ${\rm Ver}_4^+(k)$ is 
$$
(m_0,m_1,m_2)=(n_0^2+n_1^2,2n_0n_1,n_2(2n_0+2n_1+n_2)).
$$ 

Moreover, if $A$ is an associative algebra in ${\rm MixsVect}_R$ 
and $\ol{A}:=A/tA$, then $\ol{A}$ is the reduction of $A$ over $k$ as a
mixed Lie superalgebra. For example, $\mathfrak{gl}(n_0|n_1|n_2)(k)$ 
is the reduction over $k$ of $\mathfrak{gl}(n_0|n_1|n_2)(R)$. 
\end{eg} 

\subsection{The alternator of a Lie algebra in ${\rm Ver}_4^+(k)$}

Let $L$ be a Lie algebra in ${\rm Ver}_4^+(k)$. 

\begin{definition} The {\it alternator} of $L$ is the map ${\rm Alt}_L: L/{\rm Ker}D\to {\rm Ker}D$ 
defined by the formula ${\rm Alt}_L(x)=[x,x]$. The {\it reduced alternator} of $L$ is the map $\ol{\rm Alt}_L: L/{\rm Ker}D\to \mathcal H(L)={\rm Ker}D/{\rm Im}D$ induced by ${\rm Alt}_L$. We say that $L$ is {\it alternating} if ${\rm Alt}_L=0$ and {\it weakly alternating} if $\ol{\rm Alt}_L=0$.
\end{definition} 

\begin{lemma} The reduced alternator $\ol{\rm Alt}_L$ is a twisted linear map.
Hence the image of $\ol{\rm Alt}_L$ is a subspace $E(L)$ of $\mathcal H(L)$. 
\end{lemma}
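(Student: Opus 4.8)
The plan is to verify that $\ol{\rm Alt}_L$ is additive and satisfies $\ol{\rm Alt}_L(\lambda x)=\lambda^2\,\ol{\rm Alt}_L(x)$ for $\lambda\in k$ (i.e., that it is $k$-linear after a Frobenius twist of the source), and then to deduce the statement about its image using that $k$ is perfect. To begin, I would record the facts that legitimize the definition of ${\rm Alt}_L$: for $x\in L$ one has $[x,x]\in{\rm Ker}D$, since the derivation axiom gives $[x,x]'=[x',x]+[x,x']$ while skew-symmetry together with $D^2=0$ gives $[x,x']=[x',x]$, so $[x,x]'=2[x',x]=0$; and ${\rm Alt}_L(x)$ depends only on $x$ modulo ${\rm Ker}D$, because if $z'=0$ then $[z,z]=0$ by the PBW condition (Theorem \ref{PBW}) and skew-symmetry gives $[x,z]+[z,x]=[z',x']=0$, so $[x+z,x+z]=[x,x]$.

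Homogeneity then holds already at the level of ${\rm Alt}_L$, by bilinearity of the bracket: ${\rm Alt}_L(\lambda x)=[\lambda x,\lambda x]=\lambda^2[x,x]$. For additivity I expand ${\rm Alt}_L(x+y)-{\rm Alt}_L(x)-{\rm Alt}_L(y)=[x,y]+[y,x]$, and by skew-symmetry this defect equals $[y',x']=[Dy,Dx]$. The key observation is that this defect lies in ${\rm Im}D$: the derivation axiom gives $[y,Dx]'=[Dy,Dx]+[y,D^2x]=[Dy,Dx]$, so $[Dy,Dx]=D([y,Dx])$. Hence the defect vanishes in $\mathcal H(L)={\rm Ker}D/{\rm Im}D$, which gives $\ol{\rm Alt}_L(x+y)=\ol{\rm Alt}_L(x)+\ol{\rm Alt}_L(y)$; together with homogeneity this is precisely the assertion that $\ol{\rm Alt}_L$ is twisted linear.

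For the final statement: a twisted linear map is the same datum as a $k$-linear map out of the Frobenius twist of its source, and since $k$ is perfect every $\lambda\in k$ is a square, say $\lambda=\mu^2$, so $\lambda\,\ol{\rm Alt}_L(x)=\ol{\rm Alt}_L(\mu x)$ again lies in the image; combined with closure under addition (just shown), this proves that the image $E(L)$ is a $k$-subspace of $\mathcal H(L)$. All the computations here are routine; the only thing that requires care is bookkeeping the characteristic-$2$ cancellations, and the one genuine idea is that the obstruction to additivity of ${\rm Alt}_L$ is a bracket $[Dy,Dx]$ of elements of ${\rm Im}D$, which is again in ${\rm Im}D$ and therefore disappears upon passing to cohomology.
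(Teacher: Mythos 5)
Your proof is correct and follows essentially the same route as the paper's: homogeneity is immediate from bilinearity of the bracket, and the additivity defect $[x,y]+[y,x]=[y',x']$ is exhibited as $D$ applied to a bracket, hence vanishes in $\mathcal H(L)$. The additional care you take (well-definedness of ${\rm Alt}_L$ modulo ${\rm Ker}D$ via the PBW condition, and the use of perfectness of $k$ to conclude that the image is a genuine $k$-subspace) is sound and fills in details the paper leaves implicit.
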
 

\begin{proof} It is clear that ${\rm Alt}_L(\lambda x)=\lambda^2 {\rm Alt}_L(x)$, so the same is true for $\ol{\rm Alt}_L$. 
Also 
$$
{\rm Alt}_L(x+y)-{\rm Alt}_L(x)-{\rm Alt}_L(y)=[x,y]+[y,x]=[x',y']=[x',y]'
$$
hence $\ol{\rm Alt}_L$ is additive. 
\end{proof} 

\begin{definition} We say that $L$ is skew-symmetric if ${\rm Alt}_L$ is twisted linear, 
i.e., if $$[x,y]+[y,x]=0$$ (or, equivalently, $[x',y']=0$) for all $x,y\in L$.\footnote{The notions of an alternating and skew-symmetric Lie algebra were introduced in \cite{hu_lialg_2025}.}
\end{definition} 

It is clear that every alternating Lie algebra is skew-symmetric. 

\begin{eg}\label{nwa} (i) Let $L=\one\oplus P$ have basis $x,y,y'$ with $x'=0$ and let 
the only nontrivial commutator of basis elements be $[y,y]=x$. 
Then $L$ is not weakly alternating. 

(ii) It is easy to check that the Lie algebra ${\mathfrak{gl}}(m\one\oplus nP)$ 
is weakly alternating, although not skew-symmetric (hence not alternating) if $n\ne 0$. 
\end{eg} 

\begin{proposition} If $L$ is a weakly alternating Lie superalgebra in ${\rm Ver}_4^+(k)$ 
then $\mathcal H(L)$ has a natural structure of a Lie superalgebra over $k$. 
\end{proposition}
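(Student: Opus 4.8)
The plan is to put on $\mathcal H(L)={\rm Ker}D/{\rm Im}D$ the bracket and squaring map induced from those of $L$, and then to verify the axioms of Definition~\ref{supe} one at a time, in each case reducing to the operadic axioms of $L$ together with weak alternation and the PBW condition (Theorem~\ref{PBW}). Throughout I would write $\ol L:=\mathcal H(L)$ and $\ol L_j:=\mathcal H_j(L)=L_j/{\rm Im}D$ for $j=0,1$.

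First I would check that the bracket descends. Since $D$ is a derivation of $[\,,\,]$ (the axiom $[x,y]'=[x',y]+[x,y']$), the subspace ${\rm Ker}D$ is a subalgebra and ${\rm Im}D$ is an ideal inside it: for $x\in{\rm Ker}D$ and $z\in L$ one has $[x,Dz]=[x,z]'\in{\rm Im}D$ and, symmetrically, $[Dz,x]=[z,x]'\in{\rm Im}D$. Hence $[\,,\,]$ induces a bracket on $\ol L$. For the remaining $\Bbb Z/2$-graded Lie algebra axioms: the grading is inherited from $L_0,L_1\subseteq{\rm Ker}D$ together with $[L_i,L_j]\subseteq L_{i+j}$; the induced bracket is skew-symmetric, indeed alternating, since on ${\rm Ker}D$ the identity $[x,y]=[y,x]+[y',x']$ collapses to $[x,y]=[y,x]$ and $[x,x]=0$ by the PBW condition (as $x'=0$ there); and the Jacobi identity for $\ol L$ follows from the operadic Jacobi identity of $L$, all of whose ``primed'' correction terms vanish once $x'=y'=z'=0$.

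The step that genuinely uses the hypothesis, and which I expect to be the main (if modest) obstacle, is descending the squaring map $Q\colon L_1\to L_0$. I would define $\ol Q\colon\ol L_1\to\ol L_0$ by $\ol Q(\ol y):=\ol{Q(y)}$ for a lift $y\in L_1$; since $Q$ is quadratic, so is this formula, provided it is well defined. Well-definedness reduces to $Q(y+w)-Q(y)\in{\rm Im}D$ for every $w\in{\rm Im}D\subseteq L_1$, and by the polarization identity for $Q$ this difference equals $Q(w)+[y,w]$. Here $[y,w]\in{\rm Im}D$ by the previous paragraph, while writing $w=Dz$ and using the axiom $Q(x')=[x,x]$ gives $Q(w)=Q(Dz)=[z,z]$, which lies in ${\rm Im}D$ precisely because $L$ is weakly alternating, i.e. $\ol{\rm Alt}_L=0$. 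Thus $\ol Q$ is a well-defined quadratic map $\ol L_1\to\ol L_0$, and this is the only point where $\ol{\rm Alt}_L=0$ enters.

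Finally I would verify the two defining identities for $\ol Q$. Reducing $Q(y_1+y_2)-Q(y_1)-Q(y_2)=[y_1,y_2]$ modulo ${\rm Im}D$ gives $\ol Q(\ol y_1+\ol y_2)-\ol Q(\ol y_1)-\ol Q(\ol y_2)=[\ol y_1,\ol y_2]$. For the other, take $y\in L_1$ and a lift $x\in{\rm Ker}D$ of an arbitrary $\ol x\in\ol L$; since $Q(y)\in L_0\subseteq{\rm Ker}D$ and $[y,x]\in{\rm Ker}D$, reducing $[Q(y),x]=[y,[y,x]]$ modulo ${\rm Im}D$ gives $[\ol Q(\ol y),\ol x]=[\ol y,[\ol y,\ol x]]$. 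This exhibits $\ol L=\ol L_0\oplus\ol L_1$, with the induced bracket and $\ol Q$, as a Lie superalgebra over $k$ in the sense of Definition~\ref{supe}; the construction is evidently functorial in $L$, giving the asserted naturality.
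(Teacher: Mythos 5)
Your argument is correct and follows essentially the same route as the paper: the only substantive point is that $\ol Q$ is well defined on $L_1/{\rm Im}D$, which both you and the paper reduce via polarization to $Q({\rm Im}D)\subseteq{\rm Im}D$, i.e.\ to $[z,z]={\rm Alt}_L(z)\in{\rm Im}D$, which is exactly the weak alternation hypothesis. You additionally spell out the descent of the bracket and the remaining axioms, which the paper leaves implicit; these checks are routine and correct.
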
 

\begin{proof} Recall that $Q(y')=[y,y]={\rm Alt}_L(y)$. So $L$ is weakly alternating iff 
$Q: {\rm Im}D\to {\rm Im}D$. In this case, if $y\in L_1$ and $z\in {\rm Im}D$ 
then $Q(y+z)=Q(y)+Q(z)+[y,z]$, and the last two summands belong to ${\rm Im}D$, so we obtain a well defined quadratic map $$\overline Q: L_1/{\rm Im}D=\mathcal H^1(L)\to 
L_0/{\rm Im}D=\mathcal H^0(L),$$ which defines a Lie superalgebra structure on 
$\mathcal H(L)$. 
\end{proof} 

\subsection{The super enveloping algebra and the PBW theorem for Lie superalgebras in ${\rm Ver}_4^+(k)$}

\begin{definition}
    Let $L$ be a Lie superalgebra in ${\rm Ver}_4^+$ with squaring map $Q$. 
    The super enveloping algebra $U_{\rm super}(L)$ is
    $$
    U_{\rm super}(L) := U(L) / (y^2 - Q(y), y \in L_1),
    $$
    where $U(L)$ is the enveloping algebra of $L$ as a Lie algebra. 
\end{definition}

Note that if $M\subset L_1$ is a complement to ${\rm Im}D$ then 
$$
    U_{\rm super}(L) = U(L) / (y^2 - Q(y), y \in M),
    $$
    because the relations $y^2 - Q(y)$, $y\in {\rm Im}D$ are already satisfied
    in $U(L)$, and because this relation for $y_1,y_2$ implies one for $y_1+y_2$. 

\begin{eg} Suppose $Q=0$, $[,]=0$. Then 
$U_{\rm super}(L)=SL/(L_1^2)=SL/(M^2)$.
\end{eg}

Thus for a general $L$ we have a natural surjective map 
$SL/(L_1^2)\to {\rm gr}U_{\rm super}(L)$. 

\begin{theorem}\label{pbwth} (PBW)
    The natural map $SL/(L_1^2)\to {\rm gr}U_{\rm super}(L)$ is an isomorphism. 
\end{theorem}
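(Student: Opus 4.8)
The plan is to reduce the statement to the PBW theorem already available for ordinary Lie algebras in $\mathrm{Ver}_4^+(k)$ (Theorem \ref{PBW}), and then use a deformation/lifting argument to handle the extra ``$y^2 = Q(y)$'' relations. The cleanest route is to realize $U_{\mathrm{super}}(L)$ as a reduction modulo $t$ of an enveloping algebra over a suitable base, so that the PBW basis upstairs reduces to the claimed basis downstairs; alternatively, one can argue directly with a rescaling trick internal to $\mathrm{Ver}_4^+(k)$. I would develop the direct approach first. Fix a complement $M\subset L_1$ to $\mathrm{Im}\,D$, and recall that by the previous remark $U_{\mathrm{super}}(L)=U(L)/(y^2-Q(y),\ y\in M)$. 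The key algebraic input is the identity, valid in $U(L)$ for $y\in L_1$, relating $[y,y]$ to $2y^2$; here since $\mathrm{char}(k)=2$ the ``$2$'' vanishes, but the squaring map $Q$ is precisely the datum that supplies the ``missing'' half-bracket, so imposing $y^2=Q(y)$ is a torsion-free, non-redundant refinement rather than a degeneration.

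The first concrete step is to set up the candidate basis. Choose a homogeneous-type basis of $L$ adapted to the super-structure: $\{z_a\}$ with $Dz_a=0$ lifting a basis of $\mathcal{H}_0(L)$, $\{y_b\}$ a basis of $M$ (so $Dy_b=0$, lifting a basis of $\mathcal H_1(L)$), and $\{u_c, u_c'\}$ with $Du_c=u_c'$ spanning $L_P$, the $P$-isotypic part. Then $SL$ has its own PBW-type basis from \cite{etingof_koszul_2018}, namely $SL = k[z_a]\otimes \wedge(\text{something})\otimes \cdots$ organized by these generators; the quotient $SL/(L_1^2)$ additionally kills the squares of the $y_b$ (the squares of the $u_c'\in\mathrm{Im}\,D$ already vanish in $SL$ by the PBW structure for $\mathrm{Ver}_4^+(k)$, since those are the ``odd'' directions). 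So $SL/(L_1^2)$ has an explicit monomial basis: ordered monomials in the $z_a$ and $u_c$ of arbitrary degree, times square-free monomials in the $y_b$ and the $u_c'$. I would then show this set spans $\mathrm{gr}\,U_{\mathrm{super}}(L)$ — which is immediate from surjectivity of the natural map — and that it is linearly independent in $U_{\mathrm{super}}(L)$, which is the whole content.

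For linear independence, the approach I expect to work is a filtered lifting: equip $L$ with the trivial bracket and $Q$ to get the ``free'' model $U_{\mathrm{super}}(L_{\mathrm{ab}}) = SL/(L_1^2)$, which has the claimed basis by the preceding Example, and then run a standard PBW-style argument showing that the relations of $U_{\mathrm{super}}(L)$ — the Lie-algebra straightening relations from $U(L)$ together with $y_b^2 = Q(y_b)$ — form a confluent rewriting system, i.e. that overlaps resolve. The overlaps to check are the usual Jacobi-type triple overlaps among $\{z_a,y_b,u_c,u_c'\}$ (handled exactly as in \cite{etingof_koszul_2018} using the operadic Jacobi identity, since $U(L)$ already satisfies PBW by Theorem \ref{PBW}), plus the new overlaps of the form $y_b\cdot y_b\cdot y_b$ and $x\cdot y_b\cdot y_b$: one checks that reducing $y_b^3$ two ways gives the same answer because $[Q(y_b),x]=[y_b,[y_b,x]]$, and that $y_b^2$ being central modulo lower terms is exactly ensured by the relation $Q(x')=[x,x]$ together with the compatibility axioms of the squaring map. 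Concretely, $\,x y_b^2 = y_b x y_b - [x',y_b']y_b + \cdots = y_b^2 x + (\text{terms involving }[Q(y_b),x]-[y_b,[y_b,x]]) + (\text{lower filtration})$, and the bracketed correction vanishes by the axiom, so the rewriting is consistent.

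The main obstacle, and where I would spend the most care, is precisely the interaction of the squaring relation with the operadic (non-skew-symmetric) bracket of $\mathrm{Ver}_4^+(k)$: unlike the classical case, $[x,y]+[y,x]=[x',y']$ need not vanish, so ``moving $y_b$ past $x$'' produces extra $\mathrm{Im}\,D$-terms, and one must verify that these are consistent with the relation $y^2-Q(y)$ also for $y\in\mathrm{Im}\,D$ (where it already holds in $U(L)$) in a way compatible with the chosen ordering. I expect this to be controlled by the identity $Q(x')=[x,x]$, which is the bridge between the ``new'' generators' squares and the ambient Lie bracket; once that compatibility is pinned down, the diamond lemma finishes the argument. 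As a sanity check one can also verify the theorem in the two extreme cases — classical ($m_2=0$), where it recovers the known PBW theorem of \cite{bouarroudj_classification_2023}, and pure ($m_1=0$), where $L_1^2\subset\mathrm{Im}\,D$ already squares to zero in $SL$ so $SL/(L_1^2)=SL$ and the statement is Theorem \ref{PBW} itself — and then deduce the general case by a lifting argument over $\mathrm{MixsVect}_R$ when a lift exists, or directly by the rewriting argument in general.
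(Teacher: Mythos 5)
Your proposal is correct and matches the paper's proof in its essential content: both reduce to the PBW theorem for $U(L)$ as a Lie algebra in ${\rm Ver}_4^+(k)$ (Theorem \ref{PBW}), use the same adapted monomial basis, and hinge on the observation that $y^2-Q(y)$ is central in $U(L)$ precisely because of the axiom $[Q(y),x]=[y,[y,x]]$. The only difference is in how the final step is organized: where you propose a diamond-lemma overlap check, the paper notes that the central elements $z_k=c_k^2-Q(c_k)$ make $U(L)$ a free module over $k[z_1,\dots,z_s]$ with the square-free monomials as basis, so quotienting by $(z_1,\dots,z_s)$ immediately yields the claimed basis of $U_{\rm super}(L)$ without any confluence verification.
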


\begin{proof} Let $\lbrace a_1, \dots, a_m\rbrace$ be a basis for ${\rm Im}D$, $\lbrace b_1,...,b_r\rbrace$ be its completion to a basis of $L_0$, $\lbrace c_1,...,c_s\rbrace$ be its completion to a basis of $L_1$, and 
    $h_1,...,h_m\in L$ be such that $h_i'=a_i$. Then all these elements together form a basis of $L$. By the PBW theorem for $L$, a basis for $U(L)$ then consists of ordered monomials in the $a_i,b_j,h_i,c_\ell$ in which $a_i$ occur in powers $\le 1$. Also, the map $L_1\to U(L)$ sending $y$ to $y^2-Q(y)$ is twisted linear and for any $y\in L_1$, $y^2-Q(y)$ is a central element of $U(L)$, as $$[y^2-Q(y),x]=[y,x]y+y[y,x]-[Q(y),x]=[y,[y,x]]-[Q(y),x]=0$$ for all $x\in L$. Thus the elements $z_k:=c_k^2-Q(c_k)$ are central in $U(L)$, and 
    $U_{\rm super}(L)=U(L)/(z_1,...,z_s)$. But the PBW theorem for $L$
    implies that $U(L)$ is a free module over $k[z_1,...,z_s]$ whose basis is formed by monomials in $a_i,b_j,h_i,c_\ell$ in which 
    $a_i$ in $c_\ell$ occur in powers $\le 1$. This implies that these monomials form a basis of $U_{\rm super}(L)$, as claimed. 
\end{proof} 

\subsection{Restricted Lie superalgebras in ${\rm Ver}_4^+(k)$}

Recall that a restricted Lie algebra in characteristic $2$ 
is a Lie algebra $L$ with a quadratic map $Q: L\to L$ such that 
\begin{equation}\label{equaa1}
Q(x+y)-Q(x)-Q(y)=[x,y]
\end{equation}
and 
\begin{equation}\label{equaa2}
[Q(x),y]=[x,[x,y]]
\end{equation}
for all $x,y\in L$. In this case for any $x\in L$ the element $C(x):=x^2-Q(x)$ of $U(L)$ is central, 
the map $x\mapsto C(x)$ is twisted linear, and 
the quotient of $U(L)$ by the relations $C(x)=0$ for $x\in L$ 
is called the restricted enveloping algebra of $L$, denoted by
$U_{\rm res}(L)$. It is well known that 
${\rm gr}U_{\rm res}(L)=SL/(L^2)=\wedge L$ (the restricted PBW theorem). 

This notion extends to the case of Lie superalgebras, 
see \cite{bouarroudj_classification_2023}. Namely, 
a restricted structure on a Lie superalgebra $L=L_0\oplus L_1$
is an extension of $Q: L_1\to L_0$ to a restricted structure on $L$ as a Lie algebra 
compatible with the grading. Such an extension is determined 
by a quadratic map $Q: L_0\to L_0$ which gives a restricted 
structure on $L_0$ such that \eqref{equaa2} holds for $x\in L_0$, $y\in L_1$. Thus the restricted PBW theorem extends straightforwardly to the super-case: $U_{\rm res}(L)=U_{\rm super}(L)/(C(x),x\in L_0)$ and 
${\rm gr}U_{\rm res}(L)=SL/(L^2)=\wedge L$.

The notion of a restricted Lie algebra was recently generalized to Lie algebras in ${\rm Ver}_4^+(k)$ in 
\cite{benson_group_2025} (Definition 18.1). Namely, a restricted Lie algebra in 
${\rm Ver}_4^+(k)$ is a Lie algebra $L$ with a quadratic map 
$Q: {\rm Ker}D\to {\rm Ker}D$ satisfying \eqref{equaa1} for $x,y\in {\rm Ker}D$,\eqref{equaa2} for $x\in {\rm Ker}D,y\in L$, and 
\begin{equation}\label{equaa3}
[x,x]=Q(x'),\ x\in L. 
\end{equation}
Obviously, if $D=0$, this recovers the usual definition of
a restricted Lie algebra. 

We now propose a definition of a {\it restricted Lie superalgebra} 
in ${\rm Ver}_4^+(k)$ which is a common generalization 
of the last two definitions. 

\begin{definition} A restricted Lie superalgebra in ${\rm Ver}_4^+(k)$ is a Lie superalgebra $L$ in ${\rm Ver}_4^+(k)$ together with an extension of $Q: L_1\to L_0$ to a quadratic map 
$Q: {\rm Ker}D\to {\rm Ker}D$ which maps $L_0$ to $L_0$ and satisfies \eqref{equaa1} for $x,y\in L_0$,\eqref{equaa2} for $x\in L_0,y\in L$, and \eqref{equaa3}.
\end{definition}

It is clear that this reduces to the definition 
of \cite{benson_group_2025} when $L$ is pure (i.e., 
$L_0={\rm Ker}D$, $L_1={\rm Im}D$), while 
if $D=0$, i.e., $L=L_0\oplus L_1$ it reduces to the definition 
of a Lie superalgebra over $k$. 

If $L$ is a restricted Lie superalgebra in ${\rm Ver}_4^+(k)$ then for any $x\in {\rm Ker}D$ the element $C(x):=x^2-Q(x)\in U(L)$ is central by \eqref{equaa2}, 
the map $x\mapsto C(x)$ is twisted linear by \eqref{equaa1}, and we may define the restricted enveloping algebra 
$$U_{\rm res}(L):=U(L)/(C(x),x\in {\rm Ker}D)$$ equipped with a natural surjective map 
$$
\xi: SL/(({\rm Ker}D)^2)\to {\rm gr}U_{\rm res}(L). 
$$

The following PBW theorem for restricted Lie superalgebras in ${\rm Ver}_4^+(k)$
is analogous to Theorem \ref{pbwth} and specializes to the PBW theorems for both restricted Lie algebras in ${\rm Ver}_4^+(k)$ and ordinary Lie superalgebras over $k$. 

\begin{theorem} $\xi$ is an isomorphism.  
\end{theorem}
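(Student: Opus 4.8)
The plan is to mimic the proof of Theorem \ref{pbwth}, which in turn rests on the PBW theorem for $L$ as a Lie algebra in ${\rm Ver}_4^+(k)$, by producing an explicit basis of $U_{\rm res}(L)$ out of an explicit basis of $U(L)$. First I would choose a good basis of $L$ adapted to the filtration of subspaces ${\rm Im}D\subset L_0\subset {\rm Ker}D\subset L$, which is possible since $L=m\one\oplus nP$: pick a basis $\{a_1,\dots,a_m\}$ of ${\rm Im}D$, extend to a basis $\{a_i,b_j\}$ of $L_0$, extend that to a basis $\{a_i,b_j,c_\ell\}$ of ${\rm Ker}D$ (so the $c_\ell$ span a complement of $L_0$ in ${\rm Ker}D$, and their classes form a basis of a complement of $\mathcal H_0(L)$ in $\mathcal H_1(L)$, i.e. of $\mathcal H_1(L)$ if we also note $\bar c_\ell$ together with $\bar b_j$ span $\mathcal H(L)$), and finally pick $h_1,\dots,h_m\in L$ with $h_i'=a_i$; then $\{a_i,b_j,c_\ell,h_i\}$ is a basis of $L$ with $D$ determined by $h_i'=a_i$ and all other basis vectors killed by $D$. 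By the PBW theorem for $L$ (from \cite{etingof_koszul_2018}, cf. Theorem \ref{PBW}), a basis of $U(L)$ is given by ordered monomials in these generators in which the $a_i$ appear with exponent $\le 1$.

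Next I would record the two structural facts about the central elements that make the argument go through. For $x\in{\rm Ker}D$ the element $C(x)=x^2-Q(x)$ is central in $U(L)$: indeed $[x^2,y]=[x,y]x+x[x,y]=2x[x,y]-[x',y']x=2x[x,y]$ using $x'=0$, wait — more carefully, in ${\rm Ver}_4^+(k)$ one has $[x^2,y]=x[x,y]+[x,y]x+[x',[x,y]']$, and since $x\in{\rm Ker}D$ the correction term vanishes and this equals $[x,[x,y]]$ by the same computation as in the proof of Theorem \ref{pbwth} after subtracting $[Q(x),y]$, so $[C(x),y]=[x,[x,y]]-[Q(x),y]=0$ by \eqref{equaa2}. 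And $x\mapsto C(x)$ is twisted linear on ${\rm Ker}D$ by \eqref{equaa1} together with the fact that squaring is twisted-quadratic in any associative algebra in ${\rm Ver}_4^+(k)$. Therefore it suffices to impose $C(x)=0$ for $x$ running over a basis of ${\rm Ker}D$; moreover, since $C$ is twisted linear and the relations for $a_i$ and for $b_j$ already force the relations for any element of $L_0$, and the $c_\ell$-relations complete this to all of ${\rm Ker}D$, we get
\[
U_{\rm res}(L)=U(L)/(C(a_1),\dots,C(a_m),C(b_1),\dots,C(b_r),C(c_1),\dots,C(c_s)).
\]
Set $w_i:=C(a_i)=a_i^2-Q(a_i)$, $u_j:=C(b_j)=b_j^2-Q(b_j)$, $z_\ell:=C(c_\ell)=c_\ell^2-Q(c_\ell)$.

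Then I would argue, exactly as in Theorem \ref{pbwth}, that $U(L)$ is a free module over the polynomial algebra $k[w_i,u_j,z_\ell]$ with basis the PBW monomials in which $a_i,b_j,c_\ell$ all appear with exponent $\le 1$ (and $h_i$ with arbitrary exponent): this is because, in the PBW basis of $U(L)$, the top-degree term of $a_i^2$ is the ordered monomial $a_i^2$ and $w_i$ lets us rewrite it, similarly $u_j$ handles $b_j^2$ and $z_\ell$ handles $c_\ell^2$, and a degree/leading-term induction (these leading terms are algebraically independent modulo lower order, by the PBW theorem for $L$) shows freeness. Quotienting by $w_i=u_j=z_\ell=0$ then leaves precisely the monomials in $a_i,b_j,c_\ell$ of exponent $\le1$ times arbitrary powers of the $h_i$, i.e. a basis of $SL/(({\rm Ker}D)^2)$: indeed $({\rm Ker}D)^2$ is the ideal generated by the squares of all elements of ${\rm Ker}D$, which in $SL=k[a_i,b_j,c_\ell,h_i]$ is generated by $a_i^2,b_j^2,c_\ell^2$, leaving exactly the $h_i$ free. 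Hence $\xi$ is an isomorphism.

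The step I expect to be the only real obstacle is the freeness claim over $k[w_i,u_j,z_\ell]$, i.e. checking that the leading terms $a_i^2,b_j^2,c_\ell^2$ behave like independent polynomial generators inside ${\rm gr}U(L)=SL$ so that the rewriting procedure terminates and introduces no unexpected relations; this is the same subtlety as in the proof of Theorem \ref{pbwth}, handled there by invoking the PBW theorem for $L$, and the mild extra point here is simply that we have three families of central elements ($w_i$ from ${\rm Im}D$, $u_j$ from $L_0/{\rm Im}D$, $z_\ell$ from ${\rm Ker}D/L_0$) instead of one, all of which are still algebraically independent in $SL$ because $a_i,b_j,c_\ell$ are part of a basis of $L$. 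Everything else — centrality, twisted linearity, and the reduction to a basis of ${\rm Ker}D$ — is formal and parallel to the non-super and pure cases already treated, so the theorem follows by the same mechanism that proves Theorem \ref{pbwth}.
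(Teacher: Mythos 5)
Your overall strategy is the paper's: choose a basis of $L$ adapted to ${\rm Im}D\subset{\rm Ker}D$, observe that the elements $C(x)=x^2-Q(x)$ are central and twisted linear, and show that $U(L)$ is free over the polynomial algebra generated by the $C$'s of the basis vectors. But there is a genuine error in how you treat the elements $w_i=C(a_i)$ for $a_i\in{\rm Im}D$. These are not new central generators: they are already zero in $U(L)$. Indeed, the defining relation of $U(L)$ applied to $h_i\otimes h_i$ gives $h_i^2-h_i^2-(h_i')^2=[h_i,h_i]$, i.e.\ $a_i^2=[h_i,h_i]$, and \eqref{equaa3} gives $[h_i,h_i]=Q(h_i')=Q(a_i)$, so $C(a_i)=0$ in $U(L)$. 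Correspondingly, $a_i^2=0$ in ${\rm gr}\,U(L)\cong SL$: your identification $SL=k[a_i,b_j,c_\ell,h_i]$ is wrong, since for $L=m\one\oplus nP$ in ${\rm Ver}_4^+(k)$ one has $SL=k[b_j,c_\ell,h_i]\otimes\wedge(a_1,\dots,a_m)$ (this is exactly the point emphasized in the introduction of the paper). Hence the leading terms $a_i^2,b_j^2,c_\ell^2$ are \emph{not} algebraically independent in $SL$ --- the first family vanishes --- and $U(L)$ cannot be free over $k[w_i,u_j,z_\ell]$, since all the $w_i$ act by zero.

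The fix is exactly what the paper does: note that the relations $C(a_i)=0$ are automatic in $U(L)$, discard the $w_i$, and prove freeness of $U(L)$ over $k[u_j,z_\ell]$ only (equivalently, over the polynomial algebra on $C(b)$ for $b$ ranging over a completion of $\lbrace a_i\rbrace$ to a basis of ${\rm Ker}D$), with basis the PBW monomials in which $a_i$, $b_j$, $c_\ell$ all occur with exponent $\le 1$; this follows from the PBW theorem for $L$ because $SL=k[b_j,c_\ell,h_i]\otimes\wedge(a_i)$ is visibly free over $k[b_j^2,c_\ell^2]$ with that basis. With this correction the rest of your argument (centrality, twisted linearity, reduction to a basis of ${\rm Ker}D$, identification of the quotient with $SL/(({\rm Ker}D)^2)$) goes through and coincides with the paper's proof. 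Your extra splitting of the complement of ${\rm Im}D$ into the $b_j$ (inside $L_0$) and the $c_\ell$ is harmless but unnecessary, since the relations $C(x)=0$ are imposed for all $x\in{\rm Ker}D$ at once.
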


\begin{proof} Let $\lbrace a_1, \dots, a_m\rbrace$ be a basis for ${\rm Im}D$, $\lbrace b_1,...,b_r\rbrace$ be its completion to a basis of ${\rm Ker}D$, and 
    $h_1,...,h_m\in L$ be such that $h_i'=a_i$. Then all these elements together form a basis of $L$. By the PBW theorem for $L$, a basis for $U(L)$ then consists of ordered monomials in the $a_i,b_j,h_i$ in which $a_i$ occur in powers $\le 1$. By \eqref{equaa3}, $C(a_i)=0$ in $U(L)$. Define the elements $z_k:=b_k^2-Q(b_k)$. By 
    \eqref{equaa2},\eqref{equaa3} they are central in $U(L)$ and span $C({\rm Ker}D)$, so 
    $U_{\rm res}(L)=U(L)/(z_1,...,z_s)$. But the PBW theorem for $L$
    implies that $U(L)$ is a free module over $k[z_1,...,z_s]$ whose basis is formed by monomials in $a_i,b_j,h_i$ in which 
    $a_i$ and $b_j$ occur in powers $\le 1$. This implies that these monomials form a basis of $U_{\rm res}(L)$, as claimed. 
\end{proof} 

\subsection{Super-structures on Lie algebras in ${\rm Ver}_4^+(k)$}

Given a Lie algebra $L$ in ${\rm Ver}_4^+(k)$, an interesting problem is to classify 
super-structures on $L$ (i.e., structures of a Lie superalgebra in ${\rm Ver}_4^+(k)$). 
For instance, if $L=m\one\oplus nP$ then for every splitting $m=m_0+m_1$ 
one may ask for a classification of super-structures of superdimension $(m_0,m_1,n)$. 
If $m_1=0$ (the pure case), then there is a unique super-structure (the trivial one), 
so this question is interesting only for $m_1>0$. 

The alternator map provides a useful constraint on super-structures: since for any super-structure, $Q(y')=[y,y]$, $E(L)$ must be contained in $\mathcal H^0(L)$. In particular, 
$m_0\ge \dim E(L)$. For example, this shows that the Lie algebra from Example \ref{nwa}(i)
does not admit nontrivial super-structures, as $m=\dim E(L)=1$ (so we must have $m_0=1$, 
$m_1=0$). 

\subsubsection{Super-structures on $\one + P$} \label{sustr}
Here we classify non-trivial super-structures 
on Lie algebras of the form $L=\one\oplus P$ (i.e., of superdimension $(0,1,1)$)
up to isomorphism using the table in \cite{hu_lialg_2025}, Proposition 4.11. The basis of $L$ we use is $x,y,y'$, with $x'=0$. The super-structure is defined by the value of $Q(x)=\alpha y'$ that satisfies
\begin{align*}
    \alpha[y', y] = [Q(x), y] &= [x, [x, y]] \\
    \alpha[y', x] = [Q(x), x] &= [x, [x, x]] = 0.
\end{align*}
In particular, we see that if $[y', x] \ne 0$, then $\alpha = 0$.

\begin{proposition}\label{sustr1}
The Lie superalgebra structures on $\one + P$ of superdimension $(0, 1, 1)$ up to isomorphism are as follows:

\begin{center}
    \begin{tabular}{| c | c | c | c | c | c |}
        & $[x, y']$ & $[y, y]$ & $[y', y]$ & $[x, y]$ & $\alpha$ \\
        \hline
        1 & $0$ & $0$ & $0$ & $0$ & $0,1$ \\ \hline
        2 & $y'$ & $0$ & $0$ & $y + \lambda y'$ & None \\ \hline
        3 & $0$ & $y'$ & $0$ & $\lambda x$ & $0, 1$ \\ \hline
        4 & $0$ & $y'$ & $0$ & $y'$ & $0$\\ \hline
        5 & $0$ & $x$ & $\lambda x$ & $0$ & None \\ \hline
        6 & $0$ & $x$ & $\lambda x + y'$ & $0$ & None \\ \hline
        7 & $0$ & $0$ & $0$ & $x$ & $0, 1$\\ \hline
        8 & $0$ & $0$ & $0$ & $y'$ & $0,1$\\ \hline
        9 & $0$ & $0$ & $y'$ & $x + y'$ & $0$\\ \hline
        10 & $0$ & $0$ & $y'$ & $\lambda x$ & $0$\\ \hline
        11 & $0$ & $0$ & $x$ & $x + \lambda y'$ &$0$ \\ \hline
        12 & $0$ & $0$ & $x$ & $y'$ & $0$\\ \hline
        13 & $0$ & $0$ & $x$ & $0$ & $0$ \\ \hline
    \end{tabular}
\end{center}

\end{proposition}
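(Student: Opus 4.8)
The plan is to start from the classification of (operadic/genuine) Lie algebra structures on $L=\one\oplus P$ given in \cite{hu_lialg_2025}, Proposition 4.11, and for each isomorphism class determine which super-structures it admits. Concretely, a super-structure of superdimension $(0,1,1)$ on $L$ is determined by choosing $L_1={\rm Ker}D=\langle x,y'\rangle$ and $L_0={\rm Im}D=\langle y'\rangle$ (these are forced since $\mathcal H^0(L)$ must have dimension $0$), together with a quadratic map $Q:L_1\to L_0$. Since $L_0$ is one-dimensional spanned by $y'$, and a quadratic map from a two-dimensional space over a field of characteristic $2$ is determined by its values on a basis and on a single "cross" term, I would parametrize $Q$ by $Q(x)=\alpha y'$, $Q(y')=[y,y]$ (this value is forced by the axiom $Q(z')=[z,z]$ applied to $z=y$), and the polarization $Q(x+y')-Q(x)-Q(y')=[x,y']$ (forced by the additivity-defect axiom). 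So in fact $\alpha$ is the only free parameter once the Lie algebra is fixed, and the whole problem reduces to: for which $\alpha\in k$ do the remaining axioms
$$
[Q(x),x]=[x,[x,x]],\qquad [Q(x),y]=[x,[x,y]],\qquad [Q(y'),z]=[y',[y',z]]
$$
hold? (The last family, with $y'\in{\rm Im}D$, is automatically satisfied in any Lie superalgebra coming from the general theory, so I expect it imposes nothing new, but I would double-check it case by case.)

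Next I would go through the thirteen rows of the table in Proposition~\ref{sustr1} — which are exactly the thirteen isomorphism classes from \cite{hu_lialg_2025} — and in each case write out the two scalar equations coming from $[Q(x),x]=[x,[x,x]]$ and $[Q(x),y]=[x,[x,y]]$. The first of these reads $\alpha[y',x]=[x,[x,x]]$; since $[x,x]=Q(x')=0$ as $x'=0$ (more precisely $[x,x]$ lies in ${\rm Im}D$ and one checks it vanishes in each row), this forces $\alpha[y',x]=0$, so $\alpha=0$ whenever $[x,y']\neq 0$ — this is precisely how row 2 gets "None" (there $[x,y']=y'\neq 0$ but then the other equation $\alpha[y',y]=[x,[x,y]]$ with $[x,y]=y+\lambda y'$ gives $0=[x,y]'=y'\neq 0$ in the associated graded, a contradiction, so no $\alpha$ works at all — I would verify this sign of obstruction carefully). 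For the rows with $[x,y']=0$ the first equation is vacuous and everything hinges on $\alpha[y',y]=[x,[x,y]]$: reading off $[y',y]$ and $[x,y]$ from the table and computing $[x,[x,y]]$ from the bracket, one gets a linear condition on $\alpha$ whose solution set is either $\{0,1\}$ (when the condition is $\alpha^2=\alpha$, i.e. $\alpha(\alpha-1)=0$ over the relevant field, or when it is vacuous so both values work — rows 1,3,7,8), just $\{0\}$ (rows 4,9,10,11,12,13, where typically $[y',y]=0$ forces the right side to vanish and it only does for $\alpha=0$, or $[y',y]$ is independent from the value of $[x,[x,y]]$ forcing $\alpha=0$), or empty (rows 5,6, where $[y',y]$ has an $x$-component but $[x,[x,y]]$ cannot, or the parity axiom is violated outright).

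The main obstacle I anticipate is twofold. First, the bookkeeping: one must be careful that the list of thirteen rows already incorporates the reduction to isomorphism classes \emph{of Lie algebras}, and then check that the residual freedom (the choice of $\alpha$, and any remaining coordinate changes fixing the Lie structure and the super-structure up to isomorphism) has been correctly quotiented out — e.g. an automorphism rescaling $y$ may identify different values of $\lambda$ or shift $\alpha$, and one must make sure rows aren't redundant and that "$\alpha=0,1$" really are inequivalent super-structures (for instance the automorphism $y\mapsto y+\mu x$, $x\mapsto x$ can change $Q(x)=\alpha y'$ by $[x,x]$-type terms, which vanish here, so $\alpha=0$ and $\alpha=1$ stay distinct). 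Second, correctly diagnosing the "None" cases requires showing \emph{no} $\alpha$ works, which means exhibiting an inconsistency in the axiom system independent of $\alpha$ — this is where the parity/associated-graded argument ($[x,y]'=[x',y]+[x,y']$ forcing a nonzero element to be zero) does the work, and I would present that inconsistency explicitly for rows 2, 5, and 6 rather than treating $\alpha$ as a free unknown. Once all thirteen cases are dispatched, the table follows, and the classification up to isomorphism is complete by appeal to the underlying classification in \cite{hu_lialg_2025}.
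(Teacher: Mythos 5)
Your overall strategy is exactly the paper's: the super-structure is forced to be $L_0=\mathrm{Im}\,D=\langle y'\rangle$, $L_1=\mathrm{Ker}\,D=\langle x,y'\rangle$, the only free datum is $Q(x)=\alpha y'$, and one runs the two conditions $\alpha[y',x]=[x,[x,x]]$ and $\alpha[y',y]=[x,[x,y]]$ through the thirteen rows of the classification from \cite{hu_lialg_2025}, normalizing $\alpha$ by automorphisms at the end. That framework is correct and is what the paper does. However, two of your case diagnoses are off. For row 2, the contradiction is not the one you sketch: $[x,y]'=[x',y]+[x,y']=y'$ is perfectly consistent with $[x,y]=y+\lambda y'$, so there is no ``associated graded'' obstruction. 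The actual obstruction is that $[Q(x),y]=\alpha[y',y]=0$ while $[x,[x,y]]=[x,y+\lambda y']=y+\lambda y'+\lambda y'=y\ne 0$, so the second condition fails for every $\alpha$.

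More seriously, your primary mechanism for rows 5 and 6 would give the wrong answer. There $[x,y]=0$, so $[x,[x,y]]=0$ and the condition $\alpha[y',y]=0$ at most forces $\alpha=0$ (and is vacuous in row 5 when $\lambda=0$); it does not produce ``None.'' The correct obstruction -- which your own setup already contains but which you never apply to these rows -- is the forced value $Q(y')=[y,y]=x$: since $Q$ must land in $L_0=\langle y'\rangle$ and $x\notin\mathrm{Im}\,D$, no quadratic map $Q$ exists at all. This is what the paper means by saying these Lie algebras are not weakly alternating ($E(L)\not\subset\mathcal H^0(L)=0$). Finally, a small point: the conditions on $\alpha$ are always linear, never ``$\alpha^2=\alpha$''; the reduction of the solution set from all of $k$ to $\{0,1\}$ in rows 1, 3, 7, 8 comes only from the rescaling automorphism $x\mapsto cx$ (or $y\mapsto cy$), which acts on $\alpha$ by squares.
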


The rest of Subsection \ref{sustr} is dedicated to the proof of Proposition \ref{sustr1}. Along the way we also describe the super enveloping algebras of these Lie superalgebras.

Line 1 (abelian Lie algebra). $\alpha=0,1$ (other values can be obtained from $\alpha=1$ by  rescaling $x$). 
The super enveloping algebras are 
\begin{align*}
U_{\alpha=0}=k[x,y,y']/(x^2,(y')^2),\\ 
U_{\alpha=1}=k[x,y]/(x^4)\text{ with }y'=x^2.
\end{align*}

Line 2. None: if $Q(x)=\alpha y'$ then $[Q(x),y]=0$, but 
$[x,[x,y]]=[x,y+\lambda y']=y$. 

Line 3. $\alpha=0,1$ (other values can be obtained from $\alpha=1$ by rescaling $x$).
The super enveloping algebras are 
\begin{align*}
U_{\alpha=0}=k\langle x,y\rangle[y']/(xy-yx-\lambda x,(y')^2-y',x^2),\\ 
U_{\alpha=1}=k\langle x,y\rangle/(xy-yx-\lambda x,x^4-x^2)\text{ with }y'=x^2.
\end{align*}

Line 4. $\alpha=0$ (other values can be obtained by the automorphism 
$x\mapsto x+\beta y'$, $y\mapsto y$).
The super enveloping algebra is 
$$
U=k\langle x,y\rangle[y']/(xy-yx-y',(y')^2-y',x^2).
$$

Lines 5,6. None: these Lie algebras are not weakly alternating. 

Line 7. $\alpha=0,1$  (other values can be obtained from $\alpha=1$ by rescaling $x$).
The super enveloping algebras are
\begin{align*}
U_{\alpha=0}=k\langle x,y\rangle[y']/(xy-yx-x,(y')^2,x^2),\\ 
U_{\alpha=1}=k\langle x,y\rangle/(xy-yx-x,x^4)\text{ with }y'=x^2.
\end{align*}

Line 8. $\alpha=0,1$  (other values can be obtained from $\alpha=1$ by rescaling $y$).
The super enveloping algebras are
\begin{align*}
U_{\alpha=0}=k\langle x,y\rangle[y']/(xy-yx-y',(y')^2,x^2),\\
U_{\alpha=1}=k\langle x,y\rangle/(xy-yx-x^2,x^4)\text{ with }y'=x^2.
\end{align*}

Line 9. $\alpha=0$. Indeed, we have $[x,[x,y]]=[x,x+y']=0$, so $[Q(x),y]=0$. 
The super enveloping algebra is 
$$
U=k\langle x,y,y'\rangle/(xy-yx-x-y',yy'-y'y-y',(y')^2,x^2).
$$

Line 10. $\alpha=0$. Indeed, we have $[x,[x,y]]=[x,\lambda x]=0$, so $[Q(x),y]=0$. 
The super enveloping algebra is 
$$
U=k\langle x,y,y'\rangle/(xy-yx-\lambda x,yy'-y'y-y',(y')^2,x^2).
$$

Line 11. $\alpha=0$.  Indeed, we have $[x,[x,y]]=[x,x+\lambda y']=0$, so $[Q(x),y]=0$.
The super enveloping algebra is 
$$
U=k\langle x,y,y'\rangle/(xy-yx-x-\lambda y',yy'-y'y-x,(y')^2,x^2).
$$

Line 12. $\alpha=0$. Indeed, we have $[x,[x,y]]=[x,y']=0$, so $[Q(x),y]=0$.
The super enveloping algebra is 
$$
U=k\langle x,y,y'\rangle/(xy-yx-y',yy'-y'y-x,(y')^2,x^2).
$$

Line 13. $\alpha=0$. Indeed, we have $[x,[x,y]]=0$, so $[Q(x),y]=0$. 
The super enveloping algebra is 
$$
U=k\langle x,y,y'\rangle/(xy-yx-y',yy'-y'y-x,(y')^2,x^2).
$$
Note that in all these instances the PBW theorem can be checked easily by hand.

\subsubsection{Super-structures on $2 \cdot \one + P$} 
We also outline how one would compute Lie superstructures on $L := 2 \cdot \one + P$ and work them out in the first Lie algebra structure on $2 \cdot \one + P$ described in \cite{hu_lialg_2025} (Proposition 4.16). Let $L$ have basis $x, y, v, v'$ with $x' = y' = 0$. Either $m_1 = m_0 = 1$ or $m_1 = 2, m_0 = 0$. 

If $m_0 = 0$, $m_1 = 2$, then $L$ must be weakly alternating for it to have a super-structure. Write $Q(x) = \alpha v', Q(y) = \beta v'$. Thus
\begin{align*}
    \alpha[v', x] &= \beta[v', y] = 0 \\
    \alpha[v', y] &= [x, [x, y]] \\
    \alpha[v', v] &= [x, [x, v]] \\
    \beta[v', x] &= [y, [y, x]] \\
    \beta[v', v] &= [y, [y, v]].
\end{align*}

If $m_1 = m_0 = 1$, say that $L_1 = \ang{z, v'}$, $L_0 = \ang{w, v'}$, where $z, w$ are a basis for $2 \cdot \one$.
We have to define $Q(z) = \gamma w + \delta v'$ such that
\begin{align*}
    [Q(z),x] &= [z,[z,x]] \\ 
[Q(z),y] &= [z,[z,y]] \\
[Q(z),v] &= [z,[z,v]].
\end{align*}
We can reduce to 3 cases for $z, w$:
\begin{enumerate}
\item $z = x, w = y + \alpha x$, in which case $\gamma, \delta$ satisfy 
\begin{align*}
    \gamma[y,x]+\delta[v',x]          &= 0 \\
\gamma \alpha[x,y]+\delta[v',y]        &= [x,[x,y]] \\
\gamma[y,v]+\gamma\alpha[x,v]+\delta[v',v] &= [x,[x,v]].
\end{align*}
\item $z = y + \alpha x, w = y + \beta x$, $\alpha \ne \beta$, in which case $\gamma, \delta$ satisfy
\begin{align*}
    \gamma[y,x]+\delta[v',x]          &= [y+\alpha x,[y,x]] \\
\gamma\beta[x,y]+\delta[v',y]        &= \alpha[y+\alpha x,[x,y]] \\
\gamma[y,v]+\gamma\beta[x,v]+\delta[v',v] &= [y+\alpha x,[y+\alpha x,v]]. 
\end{align*}
\item $z = y + \alpha x, w = x$ in which case $\gamma, \delta$ satisfy 
\begin{align*}
    \delta[v',x]        &= [y+\alpha x,[y,x]] \\
\gamma[x,y]+\delta[v',y] &= \alpha [y+\alpha x,[x,y]] \\
\gamma[x,v]+\delta[v',v] &= [y+\alpha x,[y+\alpha x,v]]. 
\end{align*}
\end{enumerate}

\begin{proposition}
Let $L$ be the Lie algebra on $2 \cdot \one + P$ with nonzero brackets $[v, y] = \lambda x$ and $[x, y] = v'$. It has four Lie superalgebra structures with superdimension $(0, 2, 1)$ up to isomorphism, where $Q(x) = \alpha v', Q(y) = \beta v'$ for $\alpha, \beta \in {0, 1}$, when $\lambda = 0$. It has two Lie superalgebra structures with superdimension $(1, 1, 1)$ up to isomorphism when $\lambda \neq 0$, namely $L_1 = \ang{x, v'}$, $L_0 = \ang{y, v'}$, $Q(x) = 0$ or $v'$. It has two Lie superalgebra structures with superdimension $(1, 1, 1)$ up to isomorphism when $\lambda = 0$, namely 
\begin{enumerate}
    \item $L_1 = \ang{y, v'}$, $L_0 = \ang{y + x, v'}$, $Q(y) = 0$ or $v'$
    \item $L_1 = \ang{y, v'}$, $L_0 = \ang{x, v'}$, $Q(y) = 0$ or $v'$.
\end{enumerate}
\end{proposition}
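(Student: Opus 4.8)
The plan is to organize the argument by superdimension, following the trichotomy set up before the proposition. For $L=2\cdot\one+P$ with basis $x,y,v,v'$, $x'=y'=0$, $v'=Dv$, one has ${\rm Ker}D=\ang{x,y,v'}$, ${\rm Im}D=\ang{v'}$, and $\mathcal H(L)=\ang{\bar x,\bar y}$; by Lemma~\ref{decompo} a super-structure is a $\bZ/2$-grading of this $2$-dimensional space, so the superdimension is $(2,0,1)$, $(1,1,1)$ or $(0,2,1)$, and the pure case $(2,0,1)$ carries only the trivial structure. First I would record the identities valid in every case. Since $L$ is a genuine Lie algebra, Theorem~\ref{PBW} gives $[x,x]=[y,y]=0$; applying $D[a,b]=[Da,b]+[a,Db]$ together with the hypothesis that $[x,y]=v'$ and $[v,y]=\lambda x$ are the only nonzero brackets yields $\ad(v')=0$ and $[v,v]=0$. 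Hence the axiom $Q(x')=[x,x]$ forces $Q(v')=[v,v]=0$, and the polarization axiom collapses to the single consistency requirement $B_Q(x,y)=v'$. I would also use the reduction that $[Q(y),z]=[y,[y,z]]$ need only be verified for $y,z$ running over a basis: the bilinear discrepancy $[[y_1,y_2],z]+[y_1,[y_2,z]]+[y_2,[y_1,z]]$ vanishes for $y_1,y_2\in{\rm Ker}D$, since in the Jacobi identity for ${\rm Ver}_4^+(k)$ all terms containing $y_1'$ or $y_2'$ drop out.

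\emph{The case $(0,2,1)$.} Here $L_1={\rm Ker}D$, $L_0={\rm Im}D$, and $[L_1,L_1]\subseteq L_0$ is automatic. Writing $Q(x)=\alpha v'$, $Q(y)=\beta v'$ determines $Q$, and the remaining axiom $[Q(y),z]=[y,[y,z]]$ holds on every basis pair except $(y,z)=(y,v)$, where the left side is $\beta[v',v]=0$ while the right side is $[y,\lambda x]=\lambda v'$; this forces $\lambda=0$, and conversely for $\lambda=0$ every $\alpha,\beta$ is admissible. It then remains to reduce $(\alpha,\beta)$ modulo $\Aut(L)$ --- using the diagonal rescalings (which, rescaling $v'=Dv$ as well, act on the pair and fix $\alpha\beta$) and the transposition $x\leftrightarrow y$ (an automorphism precisely because $\lambda=0$) --- to the normal forms listed, and then to record $U_{\rm super}(L)=U(L)/(y^2-Q(y))$ for each using Theorem~\ref{pbwth}.

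\emph{The case $(1,1,1)$.} A super-structure is now an ordered pair of distinct lines in $\mathcal H(L)$. The image of $\Aut(L)$ in $\GL(\mathcal H)$ is the whole group when $\lambda=0$ (diagonal rescalings, the shear $y\mapsto y+\mu x$, the transposition $x\leftrightarrow y$), but only the stabilizer of $\ang{\bar x}$ when $\lambda\neq0$ (the transposition and the shear $x\mapsto x+\nu y$ then fail to be automorphisms). Hence up to $\Aut(L)$ the grading is one of the three normal forms written before the proposition: $z=x$, $w=y+\alpha x$; or $z=y+\alpha x$, $w=y+\beta x$ with $\alpha\neq\beta$; or $z=y+\alpha x$, $w=x$. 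For each I would put $Q(z)=\gamma w+\delta v'$ and impose the axioms through the basis reduction: testing $[Q(z),z]=[z,[z,z]]=0$ shows $[Q(z),z]=\gamma[w,z]$ with $[w,z]$ a nonzero multiple of $v'$ in each case, hence $\gamma=0$; and testing $[Q(z),v]=[z,[z,v]]$ is automatic when $z=x$ but, in the other two cases, pits $\lambda v'$ on the right against $\delta[v',v]=0$ on the left, forcing $\lambda=0$. Thus for $\lambda\neq0$ only the case $z=x$ survives, and after normalizing $\alpha$ to $0$ by the shear and $\delta$ to $\{0,1\}$ by a rescaling one obtains the two structures $L_1=\ang{x,v'}$, $L_0=\ang{y,v'}$, $Q(x)\in\{0,v'\}$; for $\lambda=0$ all three cases survive but, under the full $\GL(\mathcal H)$, collapse to the two gradings recorded in the statement, each carrying $Q\in\{0,v'\}$. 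As before $U_{\rm super}(L)$ is read off from Theorem~\ref{pbwth}.

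The main obstacle is the final reduction in each case: identifying $\Aut(L)$ precisely --- it genuinely changes at $\lambda=0$ --- and then computing the orbits of the residual parameters ($\alpha,\beta$ in one case, the grading together with $\gamma,\delta$ in the other), and in particular checking that the stabilizer of a normalized grading does not further identify the two squaring maps. Everything else is the routine bookkeeping of solving a few linear equations and, using the explicit bracket table and the basis reduction, verifying that no admissible super-structure is missed among the three normal forms.
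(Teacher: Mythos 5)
Your outline follows the paper's proof closely: the same split by superdimension, the same three normal forms for the $(1,1,1)$ gradings taken from the discussion preceding the proposition, and the same basis-level checks (the constraint $[Q(y),v]=[y,[y,v]]=\lambda v'$ forcing $\lambda=0$ in the $(0,2,1)$ case; $\gamma=0$ and, in the second and third normal forms, $\lambda=0$ in the $(1,1,1)$ case). Your preliminary reductions --- $\ad(v')=0$, $Q(v')=0$, and the observation that $[Q(y),x]=[y,[y,x]]$ need only be checked on basis elements because the discrepancy is the Jacobi identity with the $y_1',y_2'$ terms absent --- are correct and somewhat more systematic than what the paper writes.

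The gap is exactly the step you defer as ``the main obstacle'': the reduction of $(\alpha,\beta)$ modulo $\Aut(L)$ in the $(0,2,1)$ case, and the automorphisms you list there do not suffice. As you note yourself, a diagonal rescaling $x\mapsto ax$, $y\mapsto by$, $v\mapsto abv$ sends $(\alpha,\beta)$ to $((a/b)\alpha,(b/a)\beta)$ and preserves $\alpha\beta$, and the transposition only swaps the two entries, so for instance $(c,1)$ with $c\ne 0,1$ is never moved into $\{0,1\}^2$ by these maps. The automorphisms that actually do the work are the shears $x\mapsto x+\mu y$ (available once $\lambda=0$, since $[x,x]=[y,y]=0$ and the bracket $[x,y]=v'$ is preserved), under which the polarization identity gives $\alpha\mapsto\alpha+\mu^2\beta+\mu$. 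But once you admit these, you identify more structures than the proposition lists: with $\beta=0$ and $\mu=1$ the shear sends $(\alpha,\beta)=(1,0)$ to $(0,0)$, and whether $(0,1)$ and $(1,1)$ coincide depends on whether $\mu\mapsto\mu^2+\mu$ hits $1$ in $k$. The paper's own normalization (``by scaling $y$\dots we can also scale $\alpha$'') invokes only rescalings and shares this looseness, so the orbit computation you postpone is not routine bookkeeping --- carried out in full it changes the count. Your $(1,1,1)$ analysis, by contrast, matches the paper's and is fine: there the shear $y\mapsto y+\mu x$ is precisely what the paper uses to set $\alpha=0$ in the normal forms, and the identification of the gradings in items (i) and (ii) for $\lambda=0$ goes through the full image of $\Aut(L)$ in $\GL(\mathcal H(L))$ as you describe.
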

\begin{proof}
Suppose $m_0 = 0, m_1 = 2$. Then the only constraint on $Q$ is $\beta[v', v] = [y, [y, v]] = \lambda v'$, so $\lambda = 0$. Then by scaling $y$, we have $\beta = 0, 1$. Since $x, v'$ must scale together to preserve $[x, y] = v'$ and $Q$ is quadratic, we can also scale $\alpha$ to make sure that $\alpha = 0$ or $\alpha=1$. So the super-enveloping algebra is of the form
\begin{equation*}
    U_{\beta, \alpha} = k\ang{x, y}[v, v']/(xy-yx-v', (v')^2,x^2-\alpha v', y^2 - \beta v'), \beta, \alpha \in \{0, 1\}.
\end{equation*}

Now suppose $m_0 = m_1 = 1$. Notice that $y$ can be translated by multiples of $x$ while preserving the bracket. Hence we can assume $\alpha = 0$. 
\begin{enumerate}
    \item In this case, $z = x, w = y$. Then $\gamma v' = 0$, so $\gamma = 0$. So $Q(x) = \delta v'$. We can scale so $\delta = 0, 1$. So
    \begin{equation*}
U_{\lambda, \delta}  = k\ang{x,y,v}[v']/(xy-yx-v',xv-vx,yv-vy-\lambda x,(v')^2,x^2-\delta v').
    \end{equation*}
    \item Here we may scale $x$ so that $\beta = 1$, so $z = y, w = y + x$. Then $\gamma v' = 0$, so $\gamma = 0$. The last condition also implies that $\gamma \lambda x = \lambda v'$, so $\lambda = 0$. Then $Q(y) = \delta v'$ and we can scale so $\delta = 0, 1$. The universal enveloping superalgebra is then
    \begin{equation*}
        U_\delta=k\ang{x,y}[v,v']/(xy-yx-v',(v')^2,y^2-x^2-\delta v').
    \end{equation*}
    \item Here $z = y, w = x$. By the same logic as the previous case, $Q(y) = \delta v'$ for $\delta = 0, 1$, and the universal enveloping superalgebra is the same as above.
\end{enumerate}
\end{proof}

\subsection{Lifting Lie superalgebras in $\Ver_4^+$ to mixed Lie superalgebras}

Let $\ol{\mfrak{g}}$ be a Lie superalgebra in $\Ver_4^+(k)$ and let $K = Q_R$.

\begin{definition}
We say that $\ol{\mfrak{g}}$ lifts to a mixed Lie superalgebra $\mfrak{g}$ if $\mfrak{g}/t\mfrak{g} \cong \ol{\mfrak{g}}$. We say that $\ol{\mfrak{g}}$  lifts to a given Lie superalgebra $\widetilde{\mfrak{g}}$ over $Q_R$ if $\ol{\mfrak{g}}$ has a lift $\mfrak{g}$ over $R$ such that $\mfrak{g} \otimes_R Q_R\cong \widetilde{\mfrak{g}}$.
\end{definition}

We first determine if $\ol{\mfrak{g}}$ can admit a lift to a Lie algebra over $R/t^2 R$. Similar to \ref{smod_decomp}, we can check that $d$ admits a lift. Suppose $\ol{\mfrak{g}}$ is a Lie superalgebra in $\Ver_4^+$ that has underlying object $m \cdot \one \oplus n P$ and type $(m_0, m_1, n)$, $m_0 + m_1 = m$.
Then we can check if it admits a lift over $R/t^2R$ as follows. Choose a basis $u_i, v_j, z_\ell, w_\ell$ of $m_0 R_0 \oplus m_1 R_1 \oplus n S$ with $du_i = 0$, $dv_j = tv_j$, $dz_\ell = w_\ell$, $dw_\ell = tw_\ell$. Let $C$ be the collection of structure constants of $\mfrak{g}_0$ over $R/tR$ in this basis and $\wt{C}$ be any lift of $C$ over $R/t^2 R$. Recall that in a mixed Lie superalgebra, we require
\begin{align}
     [x, y] + [y,x ] - [dy, dx]=0 \label{ss}\\
    [[x, y], z] + [[z, x], y] + [[y, z], x] + [[dx, y], dz] + [[dz, x], dy] + [[dy, z], dx]=0 \label{jacobi} \\
     d[x, y] - [dx, y] - [x, dy] + t[dx, dy]=0 \label{daction} \\
    [y, y] - 2 Q(y)=0, y \in \ker (d - t). \label{yyeq}
\end{align}
Let
\begin{align*}
    D_2(\wt{C}) = \frac{\text{LHS of \ref{ss}}}{t} \pmod{t} \\
    D_3(\wt{C}) = \frac{\text{LHS of \ref{jacobi}}}{t} \pmod{t} \\
    E_2(\wt{C}) = \frac{\text{LHS of \ref{daction}}}{t} \pmod{t}
\end{align*}
The lift exists iff there exists a correction $\wt{C}' = \wt{C} + t F$ which kills $D_2$, $E_2$, and $D_3$. This is a system of inhomogeneous linear equations on $F$, and the lift exists iff this system has a solution.\footnote{Note that $[y, y] = 0 \pmod{t^2}$ for $y \in \ker (d - t)$, so we always have $[y, y ] -2 Q(y) \equiv 0 \pmod{t^2}$ and we don't need to worry about equation \eqref{yyeq}.} If $F_1$ is a solution of this system then $F_2$ is a solution iff $F_1-F_2$ satisfies the corresponding homogeneous system, i.e., is a cocycle in a suitable sense. Furthermore, there is an obvious notion of equivalence of two solutions $F_1,F_2$: they are equivalent if there is an isomorphism between the corresponding lifts $\mathfrak{g}_1,\mathfrak{g}_2$ over $R/t^2R$ which is the identity modulo $t$. This happens iff $F_1-F_2$ is a coboundary in a suitable sense, so the set of all lifts up to equivalence is a (possibly empty) torsor over the corresponding cohomology group (quotient of cocycles by coboundaries), as is natural to expect when studying mixed characteristic deformations of any algebraic object. However, here we will not work out this cohomology theory in detail. 

If we wish to lift directly to $R$ instead of $R/t^2 R$, instead we would need to find a correction that satisfies \ref{ss}, \eqref{jacobi}, \eqref{daction}, and \eqref{yyeq}, which is a system of quadratic equations on $F$. 

Let $P$ have basis $x, x'$. Recall that we have three Lie algebra structures on $P$: abelian, $[x', x] = x'$, $[x,x]=0$, and $[x, x] = x'$, $[x',x]=0$. Let $K^{(1, 1)}$ be the supervector space of dimension $(1, 1)$ spanned by $p, q$ with $p$ even and $q$ odd.

\begin{proposition}\label{2dim}
     The abelian Lie algebra on $P$ admits a lift to the abelian Lie superalgebra on $K^{(1, 1)}$; the Lie algebra with nonzero bracket $[x, x'] = x'$ admits a lift to the Lie superalgebra with $[p, q] = q$, $[q,q]=0$; and the Lie algebra with nonzero bracket $[x, x] = x'$ admits a lift to the Lie superalgebra with $[q, q] = p$, $[p,q]=0$.
\end{proposition}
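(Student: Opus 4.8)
The plan is to prove each of the three claims by exhibiting an explicit mixed Lie superalgebra $\mathfrak{g}$ over $R$ together with an isomorphism $\mathfrak{g}/t\mathfrak{g}\cong\ol{\mathfrak{g}}$ realizing the prescribed Lie algebra structure on $P$, and then checking that $\mathfrak{g}\otimes_R Q_R$ is the claimed super Lie algebra over $Q_R$. In each case I realize $\mathfrak{g}$ as a $d$-stable $R$-lattice inside the target super Lie algebra $\wt{\mathfrak{g}}$ of superdimension $(1|1)$ over $Q_R$; then the operadic axioms for $\mathfrak{g}$ (skew-symmetry, the Jacobi identity, and the $d$-derivation identity) hold automatically by restriction, since after inverting $t$ these axioms become the ordinary super Lie axioms, as explained before the proposition. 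What remains to verify by hand is then: $\mathfrak{g}$ is closed under the bracket; $[y,y]\in2\mathfrak{g}$ for all $y\in\mathfrak{g}_1={\rm Ker}(d-t)\cap\mathfrak{g}$ (so that $\mathfrak{g}$ is a genuine, not merely operadic, mixed Lie superalgebra); $d(\mathfrak{g})\subseteq\mathfrak{g}$ with $\mathfrak{g}\cong S$ as an $S$-module (so that $\mathfrak{g}/t\mathfrak{g}\cong P$ as an object of ${\rm Ver}_4^+(k)$); and that the reduction mod $t$ of the bracket is the desired one.

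The abelian case and the case $[x,x']=x'$ are immediate. For the abelian case, take $\mathfrak{g}=S$ with zero bracket; then $\mathfrak{g}/t\mathfrak{g}$ is the abelian $P$, and $\mathfrak{g}\otimes_R Q_R=(Q_R[\mathbb{Z}/2],\wt{\mathcal{R}})$ is the abelian $K^{(1,1)}$. For $[x,x']=x'$, identify $S\otimes_R Q_R$ with $Q_R p\oplus Q_R q$ ($p$ even, $q$ odd) via $p=\tfrac{1+g}{2}$, $q=\tfrac{1-g}{2}$, so that $d$ acts as $t$ times the projector onto the odd part and the element $d\in S$ becomes $tq$. Equip $Q_R p\oplus Q_R q$ with the super Lie bracket $[p,q]=q$, $[q,q]=0$ (Jacobi is trivial, the algebra being solvable), and take $\mathfrak{g}=S=R\cdot1\oplus R\cdot d$. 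Since $1=p+q$ and $d=tq$, one computes $[1,1]=0$, $[1,d]=d=-[d,1]$, $[d,d]=0$, so $\mathfrak{g}$ is a bracket-closed sublattice with $[y,y]=0\in2\mathfrak{g}$; reducing mod $t$ with $x=\ol 1$, $x'=\ol d$ gives $[x,x']=[x',x]=x'$ and all other brackets zero.

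The genuinely delicate case is $[x,x]=x'$. Here the alternator of $\ol{\mathfrak{g}}$ is supported on ${\rm Im}\,D$, so on the generic fibre the odd generator must square to a nonzero multiple of the even generator; but a careless lift (say $\mathfrak{g}=S$ with $[1,1]=d$) violates the $d$-derivation identity and the $2$-divisibility condition, exactly as in Example \ref{coun}. The remedy is a carefully rescaled lattice. Take $\wt{\mathfrak{g}}=Q_R p\oplus Q_R q$ with $p$ even and central and $[q,q]=p$, let $d$ act as $t$ times the odd projector, and set
$$
\mathfrak{g}:=R\cdot a\oplus R\cdot b,\qquad a:=t(p+q),\quad b:=da=t^2q.
$$
Then $d$ preserves $\mathfrak{g}$ with $da=b$, $db=tb$, so $\mathfrak{g}\cong S$; one finds $[a,a]=t^2p=ta-b\in\mathfrak{g}$, while $[a,b]$, $[b,a]$ and $[b,b]$ all lie in $t\mathfrak{g}$, so $\mathfrak{g}$ is bracket-closed; moreover $\mathfrak{g}_1=R\cdot b$ and $[b,b]=t^4p=t^2(ta-b)\in2\mathfrak{g}$, using that $2$ generates $\mathfrak{m}^2=(t^2)$; and modulo $t$, writing $x=\ol a$, $x'=\ol b$, the identity $[a,a]=ta-b$ reduces to $[x,x]=x'$ (here ${\rm char}\,k=2$ is used), with all other brackets vanishing, while $\mathfrak{g}\otimes_R Q_R\cong K^{(1,1)}$ with $[q,q]=p$, $[p,q]=0$.

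I expect the only real obstacle to be choosing the lattice in this third case so that it is simultaneously $d$-stable, closed under the bracket, satisfies $[\mathfrak{g}_1,\mathfrak{g}_1]\subseteq2\mathfrak{g}$, and has the correct (non-abelian) reduction modulo $t$: this forces the valuations of the generators $a,b$ to match up precisely as above, and a naive lift to $S$ fails. Once the right lattice is in hand the remaining verifications are short computations, and the first two cases are purely formal.
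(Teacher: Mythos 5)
Your proof is correct and follows the same basic strategy as the paper: exhibit an explicit $d$-stable, bracket-closed $R$-lattice inside the target Lie superalgebra over $Q_R$ and check the reduction. The paper disposes of all three cases in one line by asserting that ``the naive lift $x\mapsto z$, $x'\mapsto w$ satisfies the necessary identities,'' and your treatment of the third case is a genuine improvement on this: as you observe, the literal naive lift $[z,z]=w$ is impossible, since the derivation axiom $d[z,z]=[dz,z]+[z,dz]-2t^{-1}[dz,dz]$ combined with skew-symmetry forces $[z,z]\in\mathrm{Ker}\,d$, whereas $dw=tw\ne 0$. The bracket must therefore be a correction of the form $[z,z]=w+tE$ with $E^z\equiv 1\pmod t$ (exactly the constraint $E_{zz}^z\equiv 1$ that the paper derives in the subsequent two propositions), and your lattice $a=t(p+q)$, $b=t^2q$ with $[a,a]=ta-b$ realizes precisely this. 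Your verifications ($[a,a]=t^2p=ta-b\in\mathrm{Ker}\,d$, $[b,b]=t^2(ta-b)\in 2\mathfrak{g}$ using $(2)=(t^2)$, and $\overline{[a,a]}=\overline{-b}=\bar b$ in characteristic $2$) are all correct, as are the first two cases and the reduction-from-$Q_R$ argument for the operadic axioms. So the approach is the paper's, but carried out at a level of care that the paper's one-sentence proof does not supply; what your version buys is an explicit identification of the obstruction to the strictly naive lift and of the valuation constraints ($2\,\mathrm{val}(\beta)=\mathrm{val}(\alpha)+1$ in your notation) that single out the correct lattice.
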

\begin{proof}
     When lifting to $R$, we must have $m_0 = m_1 = 0$ and $m_2 = 1$, so there is no need to consider superstructures. Let $z, w$ be a basis for $S$ with $dz = w, dw = t w$; then it is easy to see that in each case, the naive lift $x \mapsto z, x' \mapsto w$ satisfies the necessary identities. 
\end{proof}

Let $\one + P$ have basis $x, y, y'$. Let $L=L(\lambda)$ be the Lie algebra on $\one + P$ with the only possibly nonzero brackets of basis vectors $[y, y] = y'$, $[x, y] = \lambda x$ with $\lambda \in k$. Let $m_0 = 1$.

\begin{proposition}
     If $\lambda\ne 0$ then $L$ does not admit a lift even to $R/t^2 R$. However, if $\lambda = 0$, so $[y, y]$ is the only nonzero bracket, then $L$ lifts to the Lie superalgebra on $K^{(2, 1)}$ with basis $p,r$ (even), $q$ (odd) and the only nonzero bracket $[q, q] = p$.
\end{proposition}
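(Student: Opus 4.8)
The plan is to apply the explicit lifting criterion described above. Since $\one\oplus P$ has superdimension $(m_0,m_1,m_2)=(1,0,1)$, any lift $\mfrak g$ must be $R_0\oplus S$ as an $S$-module, and we fix an $R$-basis $u,z,w$ with $du=0$, $dz=w$, $dw=tw$, reducing modulo $t$ to $x,y,y'$ respectively. Because the only nonzero brackets of basis vectors of $L$ are $[y,y]=y'$ and $[x,y]=\lambda x$, a lift has $[u,z]\equiv\lambda u+t(\alpha u+\beta z+\gamma w)$ and $[z,z]\equiv w+t(\cdots)$ modulo $t^2$, while $[u,u],[u,w],[w,u],[z,w],[w,z],[w,w]$ all lie in $t\mfrak g$. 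We treat the two cases in turn.

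\emph{The case $\lambda\ne0$.} Suppose, for contradiction, that a lift to $R/t^2R$ exists. Combining \eqref{daction} for $(z,z)$ with \eqref{ss} for $(z,w)$ gives $d[z,z]=[w,z]+[z,w]-2t^{-1}[w,w]=0$; writing $[z,z]=w+tv$, this reads $0=t(w+dv)$, so $dv\equiv-w\pmod t$, forcing $[z,z]\equiv w+tz+t\alpha_1u+t\gamma_1w\pmod{t^2}$. Applying \eqref{daction} to $(u,z)$ gives $[u,w]=d[u,z]\equiv t\beta w\pmod{t^2}$, and then \eqref{ss} for $(w,u)$ gives $[w,u]\equiv t\beta w\pmod{t^2}$. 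Now apply the Jacobi identity \eqref{jacobi} to the triple $(z,z,u)$: by \eqref{ss} we have $[z,u]=-[u,z]$, so $[[u,z],z]+[[z,u],z]=0$ identically, and each $d$-correction term for this triple contains either the factor $du=0$ or a bracket valued in $t\mfrak g$ multiplied by $t$, hence vanishes modulo $t^2$. So \eqref{jacobi} reduces to $[[z,z],u]\equiv0\pmod{t^2}$. But substituting the expression for $[z,z]$ and using $[u,u]\in t\mfrak g$, $[w,u]\equiv t\beta w$ and $t[z,u]\equiv t\lambda u\pmod{t^2}$, one computes $[[z,z],u]\equiv t(\beta w+\lambda u)\pmod{t^2}$. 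Since $u,z,w$ reduce to a basis of $L$, this forces $\lambda=0$ in $k$, a contradiction. Hence $L$ has no lift over $R/t^2R$ when $\lambda\ne0$.

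\emph{The case $\lambda=0$.} Here we construct a lift directly over $R$. On $\mfrak g=R_0\oplus S$ with basis $u,z,w$ as above, define the bracket by declaring $u$ central and setting
\[
[z,z]=w-tz,\qquad [z,w]=[w,z]=t(w-tz),\qquad [w,w]=t^2(w-tz),
\]
all other brackets being zero. Because $w-tz$ is $d$-closed and every bracket above is an $R$-multiple of it, a direct computation verifies \eqref{ss}, \eqref{daction} and \eqref{jacobi}; moreover, since $2$ generates $\mfrak m^2=t^2R$, we have $[w,w]=t^2(w-tz)\in 2\mfrak g$, so $\mfrak g$ is a genuine mixed Lie superalgebra with $q(w)=\tfrac{t^2}{2}(w-tz)$, whence \eqref{yyeq} also holds. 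Reducing modulo $t$ recovers $L=L(0)$ with $m_0=1$. Finally, in $\mfrak g\otimes Q_R$ the even part (the $+1$-eigenspace of $g=1-2t^{-1}d$) is $Q_Ru\oplus Q_R(w-tz)$ and the odd part is $Q_Rw$; setting $r:=u$, $p:=w-tz$, $q:=t^{-1}w$ we obtain $[q,q]=t^{-2}[w,w]=w-tz=p$ and all other brackets of $p,r,q$ vanish. Thus $\mfrak g\otimes Q_R\cong K^{(2,1)}$ with the stated structure, so $\mfrak g$ is the required lift.

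The main obstacle is the bookkeeping in the case $\lambda\ne0$: one must verify carefully that, for the triple $(z,z,u)$, all $d$-correction terms in \eqref{jacobi} (and the $2t^{-2}$-twist in \eqref{ss}) contribute only at order $t^2$, so that the obstruction collapses to the single equation $\lambda=0$; doing this by hand, rather than writing out and solving the full inhomogeneous linear system for a general correction, is where the care lies. In the case $\lambda=0$ the corresponding point is matching the scalings over $Q_R$ so that the generic fiber is literally $K^{(2,1)}$ with bracket $[q,q]=p$.
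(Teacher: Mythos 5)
Your proof is correct, and it splits into a half that matches the paper and a half that genuinely differs. For $\lambda\ne 0$ you follow the same route as the paper: pin down $[z,z]\equiv w+tz+\cdots\pmod{t^2}$ from $d[z,z]=0$, compute $[u,w]=d[u,z]\equiv t\beta w$, and extract the obstruction from the Jacobi identity for the triple $(z,z,u)$, which collapses to $[[z,z],u]\equiv 0\pmod{t^2}$ because $[u,z]+[z,u]=0$ and $[[w,u],w]\in[t\mfrak{g},w]\subseteq t^2\mfrak{g}$. You streamline one step: the paper first runs the Jacobi identity for $(u,z,w)$ to get $E_{uw}\equiv 0$ (using $\lambda\ne 0$) before reading off the contradiction, whereas you keep the unknown $\beta$ and note that the single congruence $t(\beta w+\lambda u)\equiv 0$ already forces $\lambda=0$ since $u,w$ are part of a basis; this is valid and saves an identity. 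For $\lambda=0$ you diverge: the paper decomposes $L$ as a direct sum of a one-dimensional abelian piece and the algebra $P$ with $[x,x]=x'$ and cites Proposition \ref{2dim}, while you exhibit the lift explicitly, with every bracket an $R$-multiple of the $d$-closed central element $w-tz$, and then verify the axioms, the mixed condition $[w,w]=t^2(w-tz)\in 2\mfrak{g}$, and the generic fiber by direct computation. Your explicit construction is in fact the more careful one: the axioms force $d[z,z]=0$ (combine \eqref{daction} for $(z,z)$ with \eqref{ss} for $(z,w)$), so the literal assignment $[z,z]=w$ suggested by the ``naive lift'' in Proposition \ref{2dim} cannot stand as written, and your replacement of $w$ by its $d$-closed correction $w-tz$ is exactly what is needed; identifying $p=w-tz$, $q=t^{-1}w$, $r=u$ then gives the generic fiber $K^{(2,1)}$ with sole bracket $[q,q]=p$ as claimed.
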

\begin{proof}
    Let $u, z, w$ be a basis for $R_0 \oplus S$.
    The commutation relations in a lift of $L$ have the form $[u, u] = tE_{uu}$, $[z, z] = w + tE_{zz}$, $[w, w] = tE_{ww}$, $[u, z] = \lambda u + tE_{uz}$, $[u, w] = tE_{uz}$, $[w, z] = tE_{wz}$. 

    Note that because $[w, w] = t^2[z, z]$, we get that $E_{ww} \equiv 0 \pmod{t}$. Also by skew-symmetry, $E_{zw} \equiv - E_{wz}$, $E_{uz} \equiv -E_{zu}$, $E_{uw} \equiv -E_{wu}$ mod $t$.

    The equality $d[z, z] = 0$ implies that $t(w + E_{zz}^zw) \equiv 0 \pmod{t^2}$, so we need $E_{zz}^z \equiv 1 \pmod{t}$. The equality $d[u, z] = [u, w]$ implies that $tE_{uz}^z w \equiv t E_{uw} \pmod{t^2}$, so $E_{uz}^z \equiv E_{uw}^w \pmod{t}$ and $E_{uw}^u \equiv E_{uw}^z \equiv 0 \pmod{t}$. The equality $d[w, z] = t[w, z] - [w, w]$ implies that $E_{wz}^z \equiv 0 \pmod{t}$, since the RHS is $0$ mod $t^2$.

    To check the Jacobi identities:
    from $[[z, z], z] + [[w, z], w] = 0$ we get
    $$
        0 = [w, z] + t[E_{zz}, z] + t[E_{wz}, w] 
        = tE_{wz} + tE_{zz}^z[z, z] + tE_{zz}^u[u, z] 
        = tE_{wz} + tw + tE_{zz}^u \lambda u,
    $$
    so $E_{wz}^u \equiv \lambda E_{zz}^u \pmod{t}$, $E_{wz}^w \equiv 1$, and $E_{wz}^z \equiv 0$.

    Since $[[u, z], w] + [[w, u], z] + [[z, w], u] + t[[w, u], w] = 0$, we get
    $$
        0 = [\lambda u + tE_{uz}, w] + t[E_{uw}, z] + t[E_{wz}, u] 
        = \lambda t E_{uw},
    $$
    since $E_{uw}^u = E_{uw}^z \equiv 0$ while the only nonzero bracket with $u$ is $[z, u]$ but $E_{wz}^z = 0$.
    Hence $E_{uw} \equiv 0 \pmod{t}$ as $\lambda \ne 0$.

    Now consider the relation $[[z, z], u] + [[w, u], w] = 0$. Since $E_{uw} \equiv 0$, this reduces to $[[z, z], u] = 0$. Then $[w + tE_{zz}, u] = 0$, and we know that $[u, w]$ and $[u, u]$ are multiples of $t$ while $E_{zz}^z \equiv 1$, so this is equivalent to $t(\lambda u) \equiv 0 \pmod{t^2}$, which is impossible unless $\lambda = 0$.

    However, if $\lambda = 0$ and $[y, y]$ is the only nonzero bracket, then $L$ is a direct sum of a 1-dimensional 
    commutative Lie algebra and a 2-dimensional Lie superalgebra, 
    so the statement follows from Proposition \ref{2dim}.
\end{proof}
\begin{proposition}
    Let $L$ be the Lie algebra on $\one + P$ with nonzero brackets $[y, y] = y'$, $[x, y] = \lambda x$ with $\lambda \ne 0$. Let $m_1 = 1$ with $\alpha = 0$, so $Q(x) = 0$ and $Q(y) = y'$. Then $L$ does not admit a lift to $R/t^2 R$. 
\end{proposition}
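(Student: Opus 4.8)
The plan is to exhibit an explicit obstruction in the Jacobi identity that is nonzero precisely when $\lambda\ne 0$, so that no lift of $L$ to $R/t^2R$ can exist. Since the underlying object $\one\oplus P$ together with the choice $m_1=1$ has type $(m_0,m_1,m_2)=(0,1,1)$, any lift $\mfrak g$ of $L$ over $R/t^2R$ carries an $S/t^2S$-module basis $v,z,w$ with $dv=tv$, $dz=w$, $dw=tw$, reducing modulo $t$ to $x,y,y'$ respectively. Recall that $R/t^2R$ is an $\bF_2$-algebra (since $2\in(t^2)$), so signs are irrelevant and $[v,v]=0$ (the footnote after \eqref{yyeq}); using the symmetry axiom \eqref{ss}, the bracket of $\mfrak g$ is then determined by $[z,z]=w+tE_{zz}$, $[v,z]=\lambda v+tE_{vz}$, $[v,w]=tE_{vw}$, $[z,w]=tE_{zw}$, $[w,w]=tE_{ww}$, where $E_\bullet\in\mfrak g$ and only $E_\bullet\bmod t$ matters; write $E_\bullet^v,E_\bullet^z,E_\bullet^w$ for their coordinates in the basis $v,z,w$. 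Finally note that $\ol w=y'$ brackets trivially with everything in $L$, so $[w,\,\cdot\,]$ and $[\,\cdot\,,w]$ take values in $t\mfrak g$.

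First I would extract the constraints coming from the $d$-action axiom \eqref{daction}. On the pair $(z,z)$ it reads $d[z,z]=[dz,z]+[z,dz]-t[dz,dz]$; here $d[z,z]=t(1+E_{zz}^z)w$ while the right-hand side is $0$ modulo $t^2$ (both $[w,z]+[z,w]$ and $t[w,w]$ vanish there), so $E_{zz}^z\equiv 1\pmod t$. On the pair $(v,z)$ it gives $E_{vw}\equiv E_{vz}^z\,w\pmod t$, so $[w,v]=[v,w]=tE_{vw}\equiv tE_{vz}^z\,w\pmod{t^2}$; in particular $[w,v]$ has zero $v$-coordinate.

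The crux is the Jacobi identity \eqref{jacobi} for the triple $(z,z,v)$, i.e.
$$[[z,z],v]+[[v,z],z]+[[z,v],z]+[[dz,z],dv]+[[dv,z],dz]+[[dz,v],dz]=0 .$$
Modulo $t^2$, only the first term survives: $[[v,z],z]+[[z,v],z]=0$ because $[v,z]=[z,v]$ in $\mfrak g/t^2\mfrak g$ (their difference is $[dz,dv]=[w,tv]\in t^2\mfrak g$), while each of the three twisted terms lies in $t^2\mfrak g$ — for instance $[[dz,z],dv]=[[w,z],tv]$ with $[w,z]\in t\mfrak g$, and likewise $[[dv,z],dz]=t[[v,z],w]$ and $[[dz,v],dz]=[[w,v],w]$ using $[z,w],[w,v]\in t\mfrak g$ and that $[\,\cdot\,,w]$ takes values in $t\mfrak g$. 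Hence $[[z,z],v]\equiv 0\pmod{t^2}$. On the other hand $[[z,z],v]=[w,v]+t[E_{zz},v]$, and since $[v,v]=0$, $[z,v]\equiv\lambda v\pmod t$ and $E_{zz}^z\equiv 1$, one gets $t[E_{zz},v]\equiv t\lambda v\pmod{t^2}$, so $[[z,z],v]\equiv t\lambda v+tE_{vz}^z\,w\pmod{t^2}$. Therefore $t\lambda v+tE_{vz}^z\,w=0$ in the free $R/t^2R$-module $\mfrak g/t^2\mfrak g$; comparing $v$-coordinates forces $t\lambda=0$ in $R/t^2R$, hence $\lambda=0$ in $k$, contradicting $\lambda\ne 0$. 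So $L$ admits no lift over $R/t^2R$.

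The main obstacle is the bookkeeping in the Jacobi step: one must verify carefully that all five unwanted terms in \eqref{jacobi} (and the correction term in \eqref{ss}) genuinely vanish modulo $t^2$ — this is where $[v,v]=0$, the memberships $[z,w],[w,v]\in t\mfrak g$, and the triviality of $[y',\,\cdot\,]$ on $L$ are used — and that the surviving contribution $t[E_{zz},v]$ is insensitive to the undetermined coordinates $E_{zz}^v,E_{zz}^w$, which enter only when multiplied by a further power of $t$. Granting this, the obstruction class $t\lambda v$ is manifestly nonzero for $\lambda\ne 0$.
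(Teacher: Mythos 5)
Your proof is correct and follows essentially the same route as the paper's: the same basis $v,z,w$ of $R_1\oplus S$, the same constraints $E_{zz}^z\equiv 1$ and $E_{vw}\equiv E_{vz}^z w$ extracted from the $d$-compatibility axiom, and the same obstruction $t\lambda v$ produced by the Jacobi identity on the triple $(z,z,v)$. The only (cosmetic) difference is that by comparing just the $v$-coordinate you bypass the paper's intermediate step of establishing $E_{vw}\equiv 0$ from the Jacobi identity on $(v,z,w)$.
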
   
\begin{proof}
    Let $v, z, w$ be a basis for $R_1 \oplus S$.
    The commutation relations in a lift of $L$ have the form $[v, v] = tE_{vv}$, $[z, z] = w + tE_{zz}$, $[w, w] = tE_{ww}$, $[v, z] = \lambda v + tE_{vz}$, $[v, w] = tE_{vw}$, $[w, z] = tE_{wz}$. 

    Note that because $[w, w] = t^2[z, z]$, we get that $E_{ww} \equiv 0 \pmod{t}$. Also by skew-symmetry, $E_{zw} \equiv - E_{wz}$, $E_{vz} \equiv -E_{zv}$, $E_{vw} \equiv -E_{wv}$ mod $t$.

    The equality $d[z, z] = 0$ implies that $t(w + E_{zz}^zw) \equiv 0 \pmod{t^2}$, so we need $E_{zz}^z \equiv 1 \pmod{t}$. The equality $d[v, z] = t[v, z] - [v, w]$ implies that $tE_{vz}^z w \equiv t E_{vw} \pmod{t^2}$, so $E_{vz}^z \equiv E_{vw}^w \pmod{t}$ and $E_{vw}^v \equiv E_{vw}^z \equiv 0 \pmod{t}$. The equality $d[w, z] = t[w, z] - [w, w]$ implies that $E_{wz}^z \equiv 0 \pmod{t}$,  since the RHS is $0$ mod $t^2$.

    Now we check the Jacobi identities.
    Since $[[z, z], z] + [[w, z], w] = 0$, we have 
    $$
        0 = [w, z] + t[E_{zz}, z] + t[E_{wz}, w] 
        = tE_{wz} + tE_{zz}^z[z, z] + tE_{zz}^v[v, z] 
        = tE_{wz} + tw + tE_{zz}^v \lambda v,
    $$
    so $E_{wz}^v \equiv \lambda E_{zz}^v \pmod{t}$, $E_{wz}^w \equiv 1$, and $E_{wz}^z \equiv 0$.

    Since $[[v, z], w] + [[w, v], z] + [[z, w], v] + t[[w, v], w] + t[[w, w], v] = 0$, we have
    \begin{align*}
        0 &= [\lambda v + tE_{vz}, w] + t[E_{vw}, z] + t[E_{wz}, v] \\
        &= \lambda t E_{vw}^w w + tE_{vz}^v[v, w] + tE_{vz}^w[w, w] + tE^w_{vw}[w, z] + tE^z_{vz}[z, w],
    \end{align*}
    but since $[v, w]$, $[w, w]$, and $[w, z]$ are all $0 \pmod{t}$, we have $E_{vw}^w = 0 \pmod{t}$. Therefore, $E_{vw} = 0 \pmod{t}$.

    Now consider $[[z, z], v] + [[v, z], z] + [[z, v], z] + t[[v, z], w] + [[w, v], w] + t[[w, z], v] = 0$. Since $E_{vw} \equiv 0$ and $[v, z] = [z, v] \pmod{t}$, this reduces to $[[z, z], v] = 0$. Then $[w + tE_{zz}, v] = 0$, which simplifies to $E_{zz}^z[z, v]=0$. Hence, $\lambda v = 0 \pmod{t}$, contradiction.
\end{proof}

\printbibliography


\end{document}